\newcommand{\Ind}{\mathbbm{1}}
\DeclareMathOperator{\tr}{Tr}
\newcommand{\Ac}{\mathcal{A}}
\newcommand{\Ic}{\mathcal{I}}
\newcommand{\Vc}{\mathcal{V}}
\newcommand{\eps}{\varepsilon}
\newcommand{\floor}[1]{\left\lfloor#1\right\rfloor}
\renewcommand{\Pr}{\mathbb{P}}
\newcommand{\E}{\mathbb{E}}
\newcommand{\R}{\ensuremath{\mathbb{R}}}
\newcommand{\dlim}{\stackrel{d}{\rightarrow}}
\DeclareMathOperator{\Tr}{Tr}
\DeclareMathOperator{\argmax}{argmax}
\newcommand{\iid}{\stackrel{\text{i.i.d.}}{\sim}}
\renewcommand{\ij}[1]{i_{#1}j_{#1}}
\newcommand{\allij}{i_1j_1,\dots,i_kj_k}
\DeclareMathOperator{\Exp}{Exp}
\newtheorem{lemma}{Lemma}
\newtheorem{thm}{Theorem}
\newtheorem{conj}{Conjecture}
\begin{document}

\begin{frontmatter}

\title{Adaptive testing for the graphical lasso}
\runtitle{Adaptive testing for the graphical lasso}

\author{\fnms{Max} \snm{Grazier G'Sell}\corref{}\ead[label=e1]{maxg@stanford.edu}\thanksref{t1}},
\author{\fnms{Jonathan} \snm{Taylor}\ead[label=e2]{jonathan.taylor@stanford.edu}\thanksref{t2}}
\and
\author{\fnms{Robert} \snm{Tibshirani}\ead[label=e3]{tibs@stanford.edu}\thanksref{t3}}

\thankstext{t1}{Supported by a NSF GRFP Fellowship.}
\thankstext{t2}{Supported by NSF grant DMS 1208857 and AFOSR grant 113039.}
\thankstext{t3}{Supported by NSF grant DMS-9971405 and NIH grant N01-HV-28183.}

\address{Department of Statistics\\
Stanford University\\
Sequoia Hall\\
Stanford, California 94305-4065\\
\printead{e1}\\
\phantom{E-mail:\ }\printead*{e2}\\
\phantom{E-mail:\ }\printead*{e3}}

\affiliation{Stanford University}

\runauthor{M. Grazier G'Sell et al.}

\begin{abstract}
  We consider tests of significance in the setting of the graphical lasso for
  inverse covariance matrix estimation.  We propose a simple test statistic
  based on a subsequence of the knots in the graphical lasso path.  We show
  that this statistic has an exponential asymptotic null distribution, under
  the null hypothesis that the model contains the true connected components.

  Though the null distribution is asymptotic, we show through simulation that
  it provides a close approximation to the true distribution at reasonable
  sample sizes.  Thus the test provides a simple, tractable test for the
  significance of new edges as they are introduced into the model.  Finally, we
  show connections between our results and other results for regularized
  regression, as well as extensions of our results to other correlation matrix
  based methods like single-linkage clustering.
\end{abstract}

\begin{keyword}[class=MSC]
\kwd[Primary ]{62F12}
\kwd{62H15}
\end{keyword}

\begin{keyword}
\kwd{graphical model}
\kwd{lasso}
\kwd{hypothesis testing}
\kwd{covariance test}
\end{keyword}

\end{frontmatter}


\section{Introduction}\label{section:intro}

We consider the problem of hypothesis testing for the components of the inverse
covariance matrix.  We focus on the particular case of the graphical lasso
\citep{glasso}, and construct a set of interpretable hypotheses tests and
corresponding test statistics.

In the graphical lasso, the inverse covariance matrix
$\Sigma^{-1}$ is estimated by the matrix $\Theta$ that maximizes
\begin{align*}
  \log\det\Theta -\tr(S\Theta)-\rho||\Theta||_1,
\end{align*}
where $S$ is the sample covariance matrix of the data, $\rho >0$, and $||\Theta||_1 =
\sum_{i\ne j} |\Theta_{ij}|$, the
element-wise $L_1$-norm excluding the diagonal.  We restrict ourselves to the
particular case where the data have been centered and scaled so that $S$ is the sample correlation
matrix.

As the regularization parameter $\rho$ decreases, the resulting estimate
$\Theta$ is correspondingly denser.  Solutions can be computed over a range of
$\rho$ values using existing approaches \citep[e.g.][]{glasso, witten2011new,
rahulpaper, rahulpaper2}.  Some theoretical guarantees also exist 
\citep[e.g.][]{ravikumar2008model} about the recovery of the appropriate sparsity
patterns in $\Theta$  under certain conditions.  However, there has not been
much work in inference and testing in the graphical lasso setting.

A similar problem of testing for the lasso in linear regression was recently
addressed by \cite{lockhart2013significance}.  That paper constructs a
covariance test statistic, based on the change in covariance between fitted values of the
lasso and the original observed values, at points along the lasso path where
new variables enter.  Intuitively, this statistic should be large if the
variables entering carry information.  In the case of orthogonal predictors, this statistic simplifies to a
knot-based statistic, $n\lambda_k(\lambda_k-\lambda_{k+1})$, based on the
spacing between points in the lasso path where the set of selected variables
change.  In their paper, they show that these statistics have asymptotic
$\Exp(1/k)$ distributions for the $k^{th}$ null variable to enter the lasso estimate.

Inspired by their work, this paper proposes a set of hypotheses
corresponding to a particular sequence of $\rho$
along the graphical lasso solution path.  These hypotheses test the
statistical significance of structure added after each of these points in the
graphical lasso solution set.  For these hypotheses, we construct
simple test statistics, and prove that they have an exponential asymptotic null
distribution.  We also demonstrate through simulation that these distributions
are practically useful at reasonable sample sizes.

Finally, our test statistics and the corresponding theory have close ties to
particular functions of the order statistics of the correlation matrix.  As a
result, we are able to extend our results to other correlation-based methods.
In particular, for the case of single-linkage clustering based on absolute
correlations, we are able to provide similar test statistics and asymptotic
null distributions.

Section \ref{glassotest} defines the proposed test statistic and gives an overview of its
behavior, including a demonstration in Section \ref{glassotest:example} of this
test statistic on the cell-signaling data set from \cite{glasso}.  Section \ref{section:details}
discusses the behavior of the proposed test statistic and derives its
exponential asymptotic null distributions.  Section \ref{section:simulations} demonstrates
empirically that this null distribution holds in a variety of settings and
demonstrates desirable behavior at later steps that is yet unproven.  We conclude in Section
\ref{section:discussion} with a discussion of the behavior and interpretation
of this statistic, along with its extension to correlation-based clustering.

\section{Significance testing in the graphical lasso}\label{glassotest}

Estimation based on the graphical lasso is interesting, because it allows
estimation of sparse dependence structures of data.  As the regularization
parameter $\rho$ varies, so does the level of sparsity.  This gives rise to an
inferential question: how much structure can be justified in the presence of
noise?

In this section, we define a simple set of hypotheses at a set of parameter
values where the sparsity pattern changes dramatically.  We characterize the
behavior of the estimator at these points, and its relationship to order
statistics of the correlation matrix.  The estimate at these points resembles
the situation that arises in \cite{lockhart2013significance} for the lasso
estimator in linear regression.  

In \cite{lockhart2013significance}, the authors propose a covariance-based test
statistic, which is a function of the lasso fitted values at the points in the lasso path where variables
first enter.  They show that this statistic has a simple $\Exp(1)$ asymptotic null
distribution, under the null that the true signal variables have already been
selected.

In this section, we propose a similar test statistic for our setting and the
hypotheses mentioned above.  We also provide an example demonstrating this test
statistic on biological data. In later parts of this paper, we prove that this proposed statistic has similar desirable properties to the
original lasso statistic of \cite{lockhart2013significance}. 

\subsection{The graphical lasso path}\label{glassotest:path}

We begin by summarizing some known properties of the graphical lasso estimator.
We refer to the graphical lasso \emph{path} to be the set of solutions
$\Theta(\rho)$ over all values of $\rho$.  Broadly speaking, $\Theta(\rho)$
will be sparse for large values of $\rho$ and dense for small values of $\rho$.

More explicitly, there exists a finite set of graphical lasso \emph{knots},
$\rho_1 = \max_{i\ne j} |S_{ij}| \ge \rho_2 \ge \rho_3 \ge \cdots \ge \min_{i\ne j}|S_{ij}| \ge 0$,
which are the points where the sparsity pattern in the estimate
$\Theta(\rho)$ changes.  Above $\rho_1$, $\Theta(\rho)$ is diagonal; below the
smallest knot, $\Theta(\rho)$ is dense.

\begin{figure}[ht!]
  \centering
  \includegraphics[width=3in]{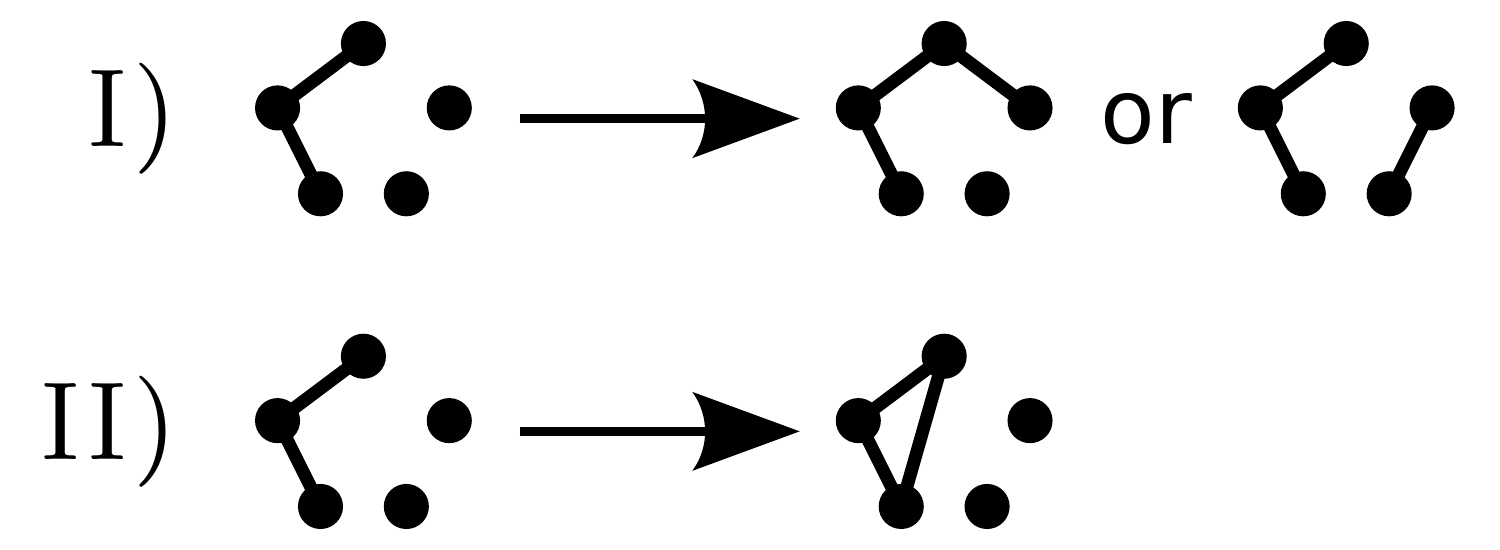}
  \caption{Illustration of the two types of sparsity pattern changes in the
  graphical lasso.  In case I, an edge is formed between two previously
  disconnected components.  In case II, an edge is formed within a previously
  connected component.  This paper addresses the knots at which the first type
  of change occurs.} 
  \label{fig:knotcase}
\end{figure}

If we consider the estimate as we move from larger parameter values to smaller
ones, one of two changes to the sparsity pattern can take place at a knot
$\rho_k$.  These are illustrated in Figure \ref{fig:knotcase}.  First, two groups of variables that were disconnected for all $\rho > \rho_k$
can become connected by a new edge.  Second, the sparsity pattern within a
previously connected group of variables can change, without altering the
overall set of connected components of variables. 

The first of these changes is particularly interesting.  These points have been 
characterized by \cite{witten2011new} and \cite{rahulpaper}, where they are
used to decompose the graphical lasso problem into smaller problems on the
connected components.  We are interested in them because they give rise to
natural hypotheses to test, and because their special behavior will lead those
hypotheses to be naturally nested.

Let $\tilde{\rho}_1 \ge \cdots \ge \tilde{\rho}_M$ correspond to the subset of
knots where the connected components of $\Theta{\rho}$ change.  As discussed in
detail in the papers referenced above, the $\Theta_{ij}(\rho) \ne 0$ for all
$\rho < |S_{ij}|$, and in particular, disconnected components can only become
connected through edges that enter in such a fashion.  As a result, the $\tilde{\rho}_k$ correspond to
points where $\rho = |S_{ij}|$ for some $i\ne j$.  Furthermore,
$\tilde{\rho}_1,\tilde{\rho}_2,\dots$ corresponds to the subsequence of order
statistics of $|S_{ij}|$ obtained by ordering the $|S_{ij}|$ from largest to
smallest, and then eliminating those $|S_{ij}|$ where $i$ and $j$ would already
be in the same connected component based on the preceding elements.

This allows the points $\tilde{\rho}_1,\dots,\tilde{\rho}_M$ of the graphical
lasso path to be characterized in terms of the extreme
values of the off-diagonal elements of $S$.  This provides inspiration for the
test statistics in the next section.  It also forms the basis for the theory
used in Section \ref{section:details} to prove the asymptotic behavior of those test
statistics.

\subsection{Testing along the graphical lasso path}
\label{glassotest:test}

With $\tilde{\rho}_1\ge \cdots \ge \tilde{\rho}_M$ defined to be the points
where the connected components of the graphical lasso estimator $\Theta(\rho)$
change, we can define a corresponding sequence of hypotheses $H_1,\dots,H_M$.
\emph{The hypothesis $H_k$ is that each connected component of the
true $\Sigma^{-1}$ is contained within a connected component of $\Theta(\rho)$, $\forall \rho < \tilde{\rho}_k$.}

Because the nature of the graphical lasso path is such that connected
components can never become disconnected, these hypotheses are nested by
construction.  That is, if $H_k$ is true, then $H_{\ell}$ is true for all $\ell
> k$.  These hypotheses correspond to natural questions about the dependence graph.
They describe whether we have found all the important connections between
groups of variables, where groups of variables can include singletons.

To test these hypotheses, we note that the $\tilde{\rho}_k$ resemble the 
regularization parameters of the lasso estimator in the case of linear
regression with orthogonal $X$, including the relationship between the
knot location and the ordered absolute values of the underlying data.  The
recent paper \citep{lockhart2013significance} constructs a test statistic for
similar hypotheses in that setting; their work provides inspiration for the
test statistics we propose here.

We propose the test statistic 
\begin{align*}
  T_k = n\tilde{\rho}_k(\tilde{\rho}_k - \tilde{\rho}_{k+1}).
\end{align*}
Intuitively, this statistic will tend to be large when signal is present, as the
$\tilde{\rho}$ values converge to large correlations and the $n$ grows.  In the
absence of signal, we will show that the spacings $\tilde{\rho}_k -
\tilde{\rho}_{k+1}$ have simple limiting distributions when scaled by
$n\tilde{\rho}_k$.

The statistic $T_k$ is similar to the knot-form of the statistic
discussed by \cite{lockhart2013significance} and summarized here in Section
\ref{section:intro}; we discuss the relationship
between $T_k$ and the more general covariance statistic form of
\cite{lockhart2013significance} in Section
\ref{section:define:equiv}.

We will show in Section \ref{section:details} that the $T_k$ have a simple
asymptotic null distribution, under the hypotheses $H_k$.  In particular, let
$k$ be the smallest value such that $H_k$ is true.  Then $T_k \dlim \Exp(1)$, and
$T_{k'} \dlim \Exp(1/(k'-k+1))$ for $k' > k$.  We also demonstrate empirically
that these asymptotic distributions are accurate even for reasonable sample
sizes and common signal structures.  This, combined with the simplicity of the
null distributions, makes the proposed statistics a reasonable candidate for directly testing the
significance of elements in the graphical lasso solution.

\subsection{Example}\label{glassotest:example}
Before proceeding with theoretical results, we demonstrate the proposed
statistic on the cell signaling data from
\cite{glasso}.  This data consists of a set of 7466 observations of 11
proteins.  We apply the graphical lasso to estimate to estimate a sparse dependence graph
describing the relationships between these proteins.

This experimental data set contains only a few variables, and was designed to
contain strong correlations.  Because our proposed statistic is really
interesting in the presence of a mix of informative and uninformative
variables, we augment the real data set with 100 noise variables, drawn from a Gaussian
distribution with zero correlation between the variables.  We then fit the
graphical lasso on this augmented data set.  With this construction, we expect
the true connected components to involve the true proteins, and none of the
noise variables.

We apply the tests from Section \ref{glassotest:test} to obtain test statistics
and p-values for the
first fifteen edges as they enter.  This is performed on repeated subsamples of
$n=500$ observations to capture variation in the resulting statistics and
$p$-values.  The $p$-values are computed in comparison to an $\Exp(1)$
distribution, which should be accurate for the first null step and conservative
for later null steps.  The results are shown in Figure \ref{fig:dataexample}.

\begin{figure}[ht!]
  \centering
  \includegraphics[width=5in]{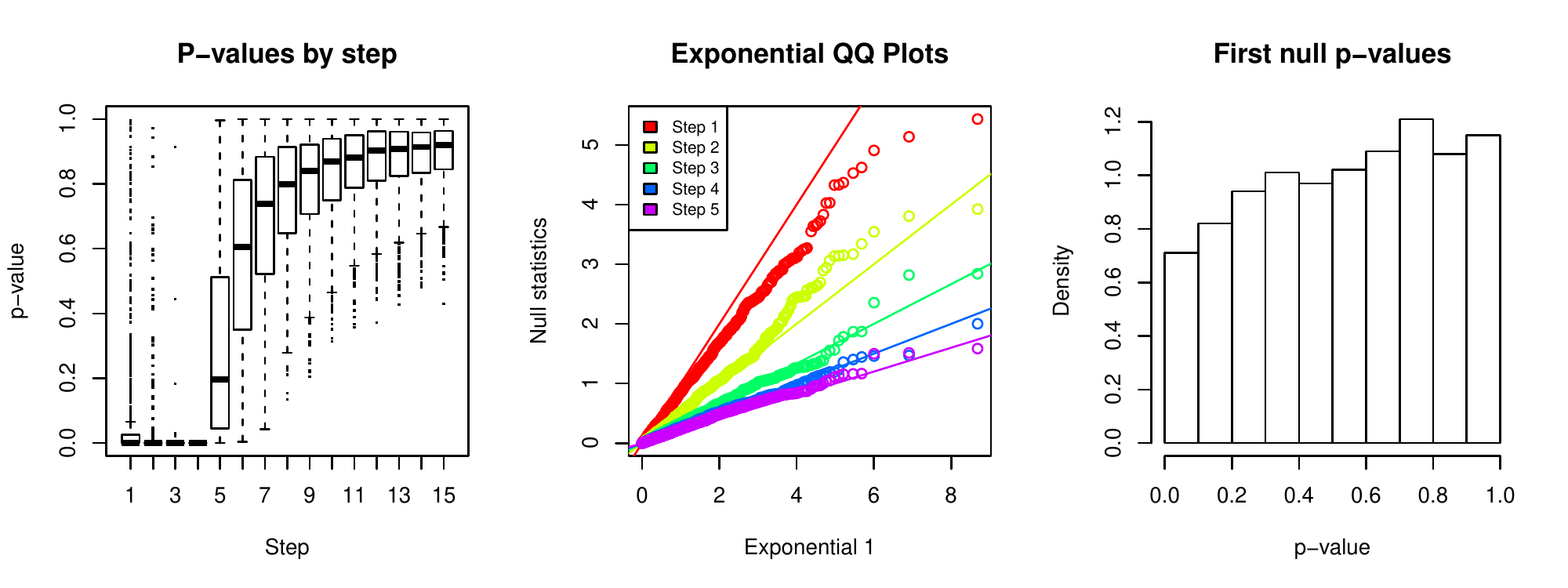}
  \caption{The proposed test statistics and their $p$-values are shown,
  using data consisting of 11 related biological proteins and 100 simulated
  noise variables on repeated subsamples of size $n=500$.  The first four to
  five steps, depending on the realization,
  correspond to the entrance of edges between true protein
  variables, with the next step corresponding to a connection to a noise
  variable.  The left panel shows boxplots of the $p$-values, where each
  statistic is compared to the CDF of an $\Exp(1)$ distribution.  Note that
  this will be conservative after the first null step.  The middle panel shows
  $\Exp(1)$ QQ-plots for the first five null steps, so that a line with slope
  $1/k$ should correspond to an Exponential distribution with mean $1/k$.  The right plot
  shows a histogram of the $p$-value at the first null step. } 
  \label{fig:dataexample}
\end{figure}

With this sample size, the first four to five edges to enter (depending on the
particular realization) correspond to edges in the original 11 signal
variables.  The next edge to enter is a noise edge, connecting either two noise
variables or a real variable and a noise variable.  The fact that the location
of this step is realization-dependent makes it difficult to accurately assess
the distribution of the first null step in the left plot.

The center plot shows QQ-plots for the distributions of the first five noise
edges accepted, wherever they occur in a particular realization.  This shows
that the distributions are close to $\Exp(1/k)$ distributions.  The right plot
shows a histogram of the $p$-values for the first of these noise edges, where
they appear close to uniform.  We attribute the conservative skew that can be seen in both
the center and right plots to the presence of signal edges that have not yet
been selected because their strength is on the same scale as the noise edges.
In the more idealized simulations of Section \ref{section:simulations}, we see that this
conservative trend vanishes.

\subsection{Covariance Formulation}\label{section:define:equiv}

In \cite{lockhart2013significance}, the knot-based statistic we discussed in
Section \ref{glassotest:test} was a special case of a more general covariance
statistic, which applied more broadly.  The covariance test statistic
corresponded to a difference in inner products between the observed and fitted
values.  The equivalent formulation in the context of the graphical lasso would be
\begin{align*}
  \tilde{T}_k =
  \frac{n}{2}\left(\Tr\left(S\Sigma(\tilde{\rho}_{k+1})\right)-\Tr\left(S\Sigma_0(\tilde{\rho}_{k+1})\right)\right)
\end{align*}
where $\Sigma_0(\tilde{\rho}_{k+1})$ is the graphical lasso solution
$\Sigma(\tilde{\rho}_{k+1})$, but without the change in nonzero pattern that
would have happened at $\tilde{\rho}_k$, so that any zero elements before
$\tilde{\rho}_k$ remain zero.

Let $i^*,j^*$ be the element that entered at $\tilde{\rho}_k$.  Since $\Sigma_0(\rho_{k+1}$ is identical to
$\Sigma(\rho_{k+1})$ except at $i^*,j^*$, the expression simplifies to 
\begin{align*}
  \tilde{T}_k &= nS_{i^*j^*}\left(S_{i^*j^*}\pm \tilde{\rho}_{k+1}\right) =
  n\tilde{\rho}_k\left(\tilde{\rho}_k - \tilde{\rho}_{k+1}\right) = T_k,
\end{align*}
so the covariance form of the statistic and the knot form of the statistic are
the same for our problem.

It is interesting to note that if used the original graphical lasso knots
$\rho_k$ instead of the restricted sequence $\tilde{\rho}_k$ corresponding to
changes in connected components, this connection between $T_k$ and $\tilde{T}_k$
fails to hold.

\section{Null behavior}\label{section:details}
Here we give justification and theory for the behavior of the statistic proposed in
Section \ref{glassotest:test} in the null case.  We begin with the global null
 of no correlation structure in Section \ref{section:details:globalnull}, and
 then extend these ideas in Section \ref{section:details:aftersignal} to a
weaker null case where correlation structure exists but has been selected.

\subsection{Global null, first step}\label{section:details:globalnull}

We consider the test statistic, $T_1 = n\tilde{\rho}_1(\tilde{\rho}_1-\tilde{\rho}_2)$, for the first edge to enter on the
path, under the global null hypothesis that the data has no correlation
structure ($H_1$ from Section \ref{glassotest:test}).  Explicitly, this is the
hypothesis that $\Sigma = I_p$, where $\Sigma$ is
the true correlation matrix and $I_p$ is the $p\times p$ identity matrix.  We
discuss this case first, since the proof is simpler and yet contains all of the
important pieces for the more general case.

The test statistic is most easily described in terms of the order statistics
of the correlation matrix.  Let $V_1,V_2$ be the first and second largest
off-diagonal elements of $S$ (in absolute value).  Since the first two edges
selected necessarily connect previously disconnected components, we have
$\tilde{\rho}_1 = V_1$ and $\tilde{\rho}_2 = V_2$, so $T_1 = n V_1(V_1-V_2)$. 

Under the null distribution, $V_1$ and $V_2$ are the largest and second largest correlations out
of $p$ i.i.d. spherically distributed vectors in $\R^n$.  Distributions of
these correlations and their extrema are discussed in \cite{corrlimit}.  We show
that as $n,p\to \infty$ with $n$ growing faster than $\log p$, $T_1$ converges
in distribution to $\Exp(1)$. 

\begin{thm}\label{thm:globalfirststep} Let $V_1\ge V_2$ be the first two order
  statistics of the absolute values of the correlations of
  $Z_1,\dots,Z_p\in\R^n$, where the $Z_i$ are i.i.d. with a spherical
  distribution around 0.  Then $nV_1(V_1-V_2) \dlim \Exp(1)$ as $n,p\to\infty$
  with $\frac{\log p}{n} \to 0$.  \end{thm}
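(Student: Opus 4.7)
The plan is to recast $T_1$ as a spacing between the top two order statistics of a nearly-i.i.d.\ sample of size $N=\binom{p}{2}$, and then invoke classical extreme-value theory. By spherical symmetry I may take each $Z_i$ uniform on the sphere, so each $|S_{ij}|$ has density
\[
f_n(v)=\frac{2}{B(1/2,(n-1)/2)}(1-v^2)^{(n-3)/2}
\]
on $[0,1]$ with survival function $\bar F_n$. A direct integration-by-parts (or Mills-ratio) computation shows that, uniformly for $v$ bounded away from $1$ with $nv\to\infty$,
\[
-(\log\bar F_n)'(v)=\frac{f_n(v)}{\bar F_n(v)}\sim nv.
\]
Under $\log p=o(n)$, $V_1$ sits at $v^\star\sim\sqrt{4\log p/n}=o(1)$, comfortably in this range.

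Next I would establish a Poisson limit for the top two ranks. Since each $\bar F_n(|S_{ij}|)$ is uniform on $[0,1]$, I would show
\[
\bigl(N\bar F_n(V_1),\,N\bar F_n(V_2)\bigr)\dlim(\Gamma_1,\Gamma_2),
\]
the first two arrival times of a unit Poisson process. This is where the dependence among the $\binom{p}{2}$ correlations matters: two correlations with disjoint index sets are independent, and those sharing an index form only an $O(1/p)$ fraction of the total. Under $\log p/n\to 0$, a Chen--Stein (or factorial-moment) computation in the spirit of \cite{corrlimit} bounds the one- and two-pair moments of the exceedance events and yields convergence of the full point process $\{N\bar F_n(|S_{ij}|)\}_{i<j}$ to a rate-one Poisson process on $[0,\infty)$; the joint limit above is obtained by restricting to the two smallest points. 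I expect this to be the main technical obstacle, as it is the step where the joint distribution of the correlations and the growth condition on $(n,p)$ really enter.

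Finally, I would connect the rank spacing back to $T_1$. Taylor expanding $\log\bar F_n$ rather than $\bar F_n$ itself, the mean value theorem gives
\[
\log(\bar F_n(V_2)/\bar F_n(V_1))=(V_1-V_2)\,f_n(\xi)/\bar F_n(\xi)
\]
for some $\xi\in(V_2,V_1)$. Both $V_1$ and $V_2$ concentrate on the same deterministic value $v^\star$ (determined by $N\bar F_n(v^\star)=1$), so $\xi/V_1\to 1$ in probability; combined with the Mills asymptotic this gives
\[
\log(\bar F_n(V_2)/\bar F_n(V_1))=nV_1(V_1-V_2)(1+o_P(1))=T_1(1+o_P(1)).
\]
But the left side equals $\log(N\bar F_n(V_2))-\log(N\bar F_n(V_1))\dlim\log(\Gamma_2/\Gamma_1)$, and writing $\Gamma_2=\Gamma_1+W$ with $W\sim\Exp(1)$ independent of $\Gamma_1\sim\Exp(1)$ one computes
\[
\Pr\bigl(\log(\Gamma_2/\Gamma_1)>t\bigr)=\E\bigl[e^{-(e^t-1)\Gamma_1}\bigr]=e^{-t},
\]
so the limit is $\Exp(1)$ and Slutsky's theorem yields $T_1\dlim\Exp(1)$.
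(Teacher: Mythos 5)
Your argument is correct in outline but follows a genuinely different route from the paper's. You push the problem through the probability integral transform, establish a Poisson process limit for the points $N\bar F_n(|S_{ij}|)$, and then recover the $\Exp(1)$ law from the identity $\Pr(\log(\Gamma_2/\Gamma_1)>t)=\E[e^{-(e^t-1)\Gamma_1}]=e^{-t}$ together with the hazard asymptotic $f_n/\bar F_n\sim nv$; the last two computations check out, and the mean-value-theorem link $\log(\bar F_n(V_2)/\bar F_n(V_1))=T_1(1+o_P(1))$ is valid because $\xi/V_1\to 1$ in probability and $T_1=O_P(1)$. The paper instead never transforms to uniforms: it writes $\Pr(nV_1(V_1-V_2)>t)$ as a sum over pairs $(i,j)$ of $\Pr(|S_{ij}|(|S_{ij}|-M_{ij})>t/n)$, decouples $S_{ij}$ from the competing maximum by replacing $M_{ij}$ with $M_{ij}^*$ (the maximum over entries sharing no index with $(i,j)$) on a high-probability event $E$, and then computes each summand exactly by conditioning on $M_{ij}^*$ and applying the Mills-ratio lemma, with Lemma \ref{lemma:astar} supplying $\sum_{i<j}\Pr(A_{ij}^*)\to 1$. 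Your approach buys a cleaner conceptual picture and would generalize immediately to the joint law of the first $k$ spacings (which the paper can only reach modulo Conjecture \ref{conjecture}); the paper's approach buys explicit, quantitative error terms at every stage. The one place you should be careful: the Poisson process convergence you defer is not a routine citation but is exactly where the dependence among correlations sharing an index must be controlled --- it is the counterpart of the paper's event $E$, the $M_{ij}$ versus $M_{ij}^*$ comparison, and the Chen--Stein error bound $2p^3\bar F_n^2(x)$ imported from \cite{corrlimit}. Since you name the right tool and the needed factorial-moment bounds do hold under $\log p/n\to 0$, I regard this as a legitimate deferral rather than a gap, but a complete write-up would need to carry out that estimate rather than assert it.
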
 \begin{proof} Begin by defining the
    following quantities: \begin{enumerate} \item $M_{ij} = \max_{(i',j')\ne
        (i,j)} |S_{i'j'}|$ and $M_{ij}^* = \max_{(i',j')\notin \{i,j\}}
        |S_{i'j'}|$.  Note
        that $M_{ij}^* \le M_{ij}$.  \item Events $A_{ij} = \{|S_{ij}| >
        M_{ij}\}$ and $A_{ij}^* = \{|S_{ij}| > M_{ij}^*\}$.  Note that
        $\{|S_{ij}|(|S_{ij}|-M_{ij})>t/n\} \subseteq A_{ij}$,
        $\{|S_{ij}|(|S_{ij}|-M_{ij}^*)>t/n\} \subseteq A_{ij}^*$, that the $A_{ij}$
        are disjoint, and that $A_{ij}\subseteq A_{ij}^*$.  The events
        $A_{ij}$, $i<j$,
        form a partition, since exactly one $|S_{ij}|$ is larger than all the
        others.  The events $A_{ij}^*$ form an approximate partition; Lemma
        \ref{lemma:astar} shows that the approximation is close.  \item
        $(i^*,j^*) = \argmax_{i<j} |S_{ij}|$, the indices of the largest
        off-diagonal element of $S$.  \item $E = \{M_{i^*j^*} =
        M_{i^*j^*}^*\}$. This is the event that the second largest element of
        $S$ shares no indices with the largest element.  In particular, this is
        a subset of the event in Lemma \ref{lemma:maximadisconnect}, so
        $\Pr(E^c) \to 0$ as $n,p\to \infty$ with $\frac{\log p}{n}\to 0$.
    \end{enumerate}

  We can expand $\Pr(T_1 > t) = \Pr(nV_1(V_1-V_2)>t)$ as
  \begin{align*}
    \Pr(nV_1(V_1-V_2)>t) &= \Pr(V_1(V_1-V_2) > t/n) = \sum_{i<j}^p
    \Pr\left(|S_{ij}|\left(|S_{ij}|-M_{ij}\right)>t/n\right).
  \end{align*}
  Here we are taking advantage of the fact that
  $\Pr\left(|S_{ij}|\left(|S_{ij}|-M_{ij}\right)>t/n\right)$ is only positive for one
  pair $i<j$, so the events being considered are disjoint and cover the event
  that $V_1(V_1-V_2)>t/n$.

  This sum can be further expanded based on intersection with $E$, obtaining  
  \begin{align*}
    \Pr(nV_1(V_1-V_2)>t) &= \sum_{i<j}^p
    \Pr\left(\left\{|S_{ij}|\left(|S_{ij}|-M_{ij}\right)>t/n\right\}\cap
    E\right)\\
    &\qquad + 
    \sum_{i<j}^p \Pr\left(\left\{|S_{ij}|\left(|S_{ij}|-M_{ij}\right)>t/n\right\}\cap
    E^c\right).
  \end{align*}
  Note that
  $\Pr\left(\left\{|S_{ij}|\left(|S_{ij}|-M_{ij}\right)>t/n\right\}\cap
  E^c\right) \le \Pr(A_{ij}\cap E^c)$, so
  \begin{align*}
    &\left|\Pr(nV_1(V_1-V_2)>t) - \sum_{i<j}^p
    \Pr\left(\left\{|S_{ij}|\left(|S_{ij}|-M_{ij}\right)>t/n\right\}\cap
    E\right)\right|\\
    &\quad = \sum_{i<j} \Pr\left(\left\{|S_{ij}|\left(|S_{ij}|-M_{ij}\right)>t/n\right\}\cap
    E^c\right)
     \le \sum_{i<j} \Pr(A_{ij}\cap E^c) = \Pr(E^c)
  \end{align*}

  By a similar argument, 
  \begin{align*}
    &\left|\sum_{i<j}^p
    \Pr\left(\left\{|S_{ij}|\left(|S_{ij}|-M^*_{ij}\right)>t/n\right\}\right) -
    \sum_{i<j}^p \Pr\left(\left\{|S_{ij}|\left(|S_{ij}|-M^*_{ij}\right)>t/n\right\}\cap
    E\right)\right|\\
    &\quad = \sum_{i<j} \Pr\left(\left\{|S_{ij}|\left(|S_{ij}|-M^*_{ij}\right)>t/n\right\}\cap
    E^c\right)\\
    &\quad \le \sum_{i<j} \Pr\left(A_{ij}^*\cap E^c\right) = \sum_{i<j}
    \Pr(A_{ij}\cap E^c) + \sum_{i<j}\Pr\left( (A_{ij}^*\backslash A_{ij})\cap E^c\right)\\
    &\quad \le \Pr(E^c) + \sum_{i<j} \Pr(A_{ij}^*\backslash A_{ij})
  \end{align*}

  Noting that $\Pr(\{|S_{ij}|(|S_{ij}|-M_{ij})>t/n\}\cap
  E)=\Pr(\{|S_{ij}|(|S_{ij}|-M^*_{ij})>t/n\}\cap E)$, we can combine these two
  statements to obtain
  \begin{align*}
    \left|\Pr(nV_1(V_1-V_2)>t) - \sum_{i<j}^p
    \Pr\left(\left\{|S_{ij}|\left(|S_{ij}|-M^*_{ij}\right)>t/n\right\}\right)\right|\le
    2\Pr(E^c) + \sum_{i<j} \Pr(A_{ij}^*\backslash A_{ij}).
  \end{align*}
  
  We apply Lemma \ref{lemma:gaplimit} to $M_{ij}^*$ (which is now
  $(p-2)\times(p-2)$) to approximate the second term, giving 
  \begin{align*}
    &\left|\Pr(nV_1(V_1-V_2)>t) - e^{-t}\left(\sum_{i<j}^p
    \Pr\left(A_{ij}^*\right)\right)\left(1+\frac{1}{\sqrt{\log p}}+\frac{\log
    p}{n}\right)\right|\\
    &\qquad\le
    2\Pr(E^c) + \sum_{i<j} \Pr(A_{ij}^*\backslash A_{ij}) +
    O\left(p^2e^{-(p-2)^{3/5}/4\sqrt{\log (p-2)}}\right).
  \end{align*}
  By Lemma \ref{lemma:astar}, $\sum_{i<j}^p \Pr\left(A_{ij}^*\right) \to 1$,
  and since $\sum_{i<j} \Pr(A_{ij}) = 1$, $\sum_{i<j} \Pr(A_{ij}^*\backslash
  A_{ij})$, this implies that 
  \begin{align*}
    \Pr(T_1 > t) &= \Pr(nV_1(V_1-V_2)>t) \to e^{-t}
  \end{align*}
  and therefore $T_1 \dlim \Exp(1)$.
\end{proof}

Here, and throughout this paper, we use the notation $f(x) = O(g(x))$ to denote
the situation where $\exists M\in \R^+$, $x_0\in\R$ such that $|f(x)| \le
M|g(x)|$ for all $x>x_0$.

This result establishes that under the global null hypothesis, $T_1 \dlim
\Exp(1)$.  The proofs of the supporting Lemmas are left for the Appendix.  Note that throughout this paper, we define $\Exp(\mu)$ to refer to the exponential
distribution with mean $\mu$ (not rate $\mu$).

\subsection{Global null, later steps}

The results of \cite{lockhart2013significance} for the lasso suggest that it
should also be true that $T_k = n\tilde{\rho}_k\left(\tilde{\rho}_k -
\tilde{\rho}_{k+1}\right) \dlim \Exp(1/k)$ under the same global null.  This
is supported by simulation results like those shown in Figure
\ref{fig:globalnullsim} and \ref{fig:statmeans}, which empirically observe that
these distributions are nearly exponential, and that the means agree with the
$1/k$ prediction.  

The proof that $n\tilde{\rho}_k\left(\tilde{\rho}_k -
\tilde{\rho}_{k+1}\right) \dlim \Exp(1/k)$ given in this section relies on a
conjecture, which we have so far been unable to prove.  This conjecture allows
the approximation from \cite{corrlimit} to be applied when bounding the
probability that $k$ independent correlations exceed the maximum correlation of
a large correlation matrix.  The conjecture is as follows.

\begin{conj}\label{conjecture}
  Let $f_n(x)$ be the density of $\sqrt{n}|S_{ij}|$, $\bar{F}_n(x) =
  \int_x^{\sqrt{n}} f_n(x)$, and $G_{n,p}(x)$ be the CDF of the maximum of an
  independent $p\times p$ correlation matrix (based on $n$ observed $p$
  vectors) scaled by $\sqrt{n}$.  Then for fixed $k$, 
  \begin{align*}
    \int_0^{\sqrt{\left(4-\frac{2}{k+2}\right)\log p}} G_{n,p}(x) \bar{F}_n^{k-1}(x)f_n(x)dx =
    o\left(\frac{1}{p^{2k}}\right).
  \end{align*}
\end{conj}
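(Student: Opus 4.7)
The plan is to show that $G_{n,p}(x)$ is already super-polynomially small throughout the integration region $[0,x_0]$ (with $x_0 := \sqrt{(4-2/(k+2))\log p}$), so the integrand is dominated by this CDF factor, and then to extract the desired rate from an incomplete-gamma tail bound. The intuition is that $x_0$ lies strictly below the typical location $\sqrt{4\log p}$ of the maximum of the off-diagonal entries of an independent $p\times p$ correlation matrix, so $\{\max|S_{ij}|\le x\}$ is already extremely unlikely for every $x$ in the integration range.

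First I would invoke a Poisson/Chen--Stein independence approximation of the flavor already used in Lemma~\ref{lemma:gaplimit} (and originating in \cite{corrlimit}) to bound
\begin{align*}
G_{n,p}(x) \;\le\; \exp\!\bigl(-\tbinom{p}{2}\bar F_n(x)\bigr) \;+\; \eta_{n,p},
\end{align*}
where $\eta_{n,p}$ decays faster than any polynomial in $p$. Substituting $y = \bar F_n(x)$ (so $dy = -f_n(x)\,dx$, with $y_0 := \bar F_n(x_0)$) and then $u = \binom{p}{2} y$, the integral is bounded above by
\begin{align*}
\int_{y_0}^{1} e^{-\binom{p}{2} y}\, y^{k-1}\, dy \;+\; \frac{\eta_{n,p}}{k} \;\le\; \binom{p}{2}^{-k}\,\Gamma\!\bigl(k,\tbinom{p}{2}y_0\bigr) \;+\; \frac{\eta_{n,p}}{k},
\end{align*}
where $\Gamma(k,\cdot)$ is the upper incomplete gamma function. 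Standard Gaussian-tail estimates (using that under independence $\sqrt n|S_{ij}|$ is asymptotically distributed as $|Z|$ with $Z\sim\mathcal{N}(0,1)$) give $y_0 = \Theta\!\bigl(x_0^{-1}\,p^{-(2-1/(k+2))}\bigr)$, hence $\binom{p}{2}y_0 = \Theta\!\bigl(p^{1/(k+2)}/\sqrt{\log p}\bigr)\to\infty$. Then the asymptotic $\Gamma(k,a)\sim a^{k-1}e^{-a}$ as $a\to\infty$ reduces the leading term to
\begin{align*}
\binom{p}{2}^{-k}\bigl(\tbinom{p}{2}y_0\bigr)^{k-1}\exp\!\bigl(-\tbinom{p}{2}y_0\bigr) \;=\; O\!\Bigl(p^{-2k+(k-1)/(k+2)}\, e^{-p^{1/(k+2)}/\sqrt{\log p}}\Bigr),
\end{align*}
and the super-exponential factor annihilates every polynomial in $p$, so the whole quantity (and hence the conjectured integral) is $o(p^{-N})$ for every fixed $N$, in particular $o(p^{-2k})$.

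The main obstacle, and presumably what has so far prevented a proof, is establishing the bound on $G_{n,p}$ above with error $\eta_{n,p}$ uniformly valid on the entire interval $[0,x_0]$, rather than only in the narrow band around the extreme-value scale $\sqrt{4\log p}$ where the Poisson-type limits of \cite{corrlimit} are traditionally stated. The moderate-deviation regime $\binom{p}{2}\bar F_n(x)\gtrsim 1$ is covered by the existing approximations with polynomially controlled errors, but for smaller $x$ the dependence structure of the sample correlations $S_{ij}$ must be controlled more delicately (via a Chen--Stein bound on a dependency-neighborhood graph, a Slepian-type inequality, or a direct Bonferroni truncation) in order to rule out the dependence inflating $G_{n,p}(x)$ beyond the independent-product form by more than an $o(p^{-2k})$ amount. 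Patching the two regimes together with sufficiently sharp rates is the genuine technical step.
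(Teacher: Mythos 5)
You should first be aware that the paper does not prove this statement: it is explicitly labelled a conjecture, and the authors state they ``have so far been unable to prove'' it. They only record a stronger sufficient condition, namely $p^{2k}\Pr\bigl(\sqrt{n}M_{n,p} < \sqrt{(4-\frac{2}{k+2})\log p}\bigr)\to 0$, and note it would hold if the entries of $\sqrt{n}S$ were exactly independent Gaussians. So there is no paper proof to compare against, and the right question is whether your argument closes the gap the authors could not. It does not. Your entire proof hinges on the bound $G_{n,p}(x)\le \exp\bigl(-\binom{p}{2}\bar F_n(x)\bigr)+\eta_{n,p}$ with $\eta_{n,p}$ super-polynomially small \emph{uniformly on} $[0,x_0]$, and that bound is precisely the unproven content of the conjecture. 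The only quantitative Chen--Stein error available (Lemmas 6.3--6.4 of \cite{corrlimit}, as invoked in Lemma \ref{lemma:intboundfix}) is $2p^3\bar F_n^2(x)$, which for $x$ well below the extreme-value scale, where $\bar F_n(x)=\Theta(1)$, is of order $p^3$ and hence useless; the paper's choice of the cut point $\sqrt{(4-\frac{2}{k+2})\log p}$ is exactly the threshold above which $\int 2p^3\bar F_n^{k+1}f_n\,dx=o(p^{-2k})$ (Lemma \ref{lemma:int2}). Your assertion that the ``moderate-deviation regime $\binom{p}{2}\bar F_n(x)\gtrsim 1$ is covered by the existing approximations with polynomially controlled errors'' therefore overstates what is known: the controlled regime is the complement of the integration region in the conjecture, not part of it. Once the key bound is assumed, your incomplete-gamma computation is correct (and consistent in spirit with Lemma \ref{lemma:int3}), but at that point you are assuming something at least as strong as the authors' sufficient condition, since $\exp\bigl(-\binom{p}{2}\bar F_n(x_0)\bigr)=\exp\bigl(-\Theta(p^{1/(k+2)}/\sqrt{\log p})\bigr)$ already forces $G_{n,p}(x_0)=o(p^{-2k})$.

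To your credit, you identify this as ``the genuine technical step,'' which is an accurate diagnosis; but a proof proposal that defers its only nontrivial step to an unproven uniform approximation is a reduction, not a proof. It is also worth noting what tool the paper actually has for the low-$x$ regime and why it does not scale: for $k=1$ the authors avoid Chen--Stein entirely and use the third-moment Chebyshev bound of Lemma \ref{lemma:chebyshev}, $G_{n,p}(x)\le \binom{p}{2}^{-2}\bar F_n^{-2}(x)\bigl(1+O(\sqrt{\log p}/p^{1/4})\bigr)$, which after integration delivers $o(p^{-2})$ (Lemma \ref{lemma:int1}) but structurally cannot produce $o(p^{-2k})$ for $k\ge 2$: that would require controlling higher moments of the count $\sum_{i<j}\Ind_{\{\sqrt{n}|S_{ij}|>x\}}$, whose entries are only pairwise independent, with triangle-cycle dependencies contaminating the third and higher moments. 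Any completed proof would need either such higher-moment bounds, or a lower-tail deviation inequality for $M_{n,p}$ below its typical scale $\sqrt{4\log p}$ with super-polynomial (or at least $o(p^{-2k})$) decay. Your proposal names plausible tools (dependency-graph Chen--Stein, Slepian, Bonferroni truncation) but does not execute any of them, so the conjecture remains open under your argument.
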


A stronger, but sufficient condition for this conjecture to hold is that
$p^{2k}\Pr\left(\sqrt{n}M_{n,p} < \sqrt{\left(4-\frac{2}{k+2}\right)\log p}\right)
\to 0$ for fixed $k$.

We believe that this conjecture holds.  The conjecture, and the stronger
sufficient condition, would hold if the elements of $\sqrt{n}S$ were independent
Gaussians.  As $n$ and $p$ grow large, the elements of $\sqrt{n}S$ are very
nearly independent Gaussians; the rest of the proofs in this paper
take advantage of that near-Gaussianity.

Assuming Conjecture \ref{conjecture} holds, the following theorem proves that
$T_k = n\tilde{\rho}_k(\tilde{\rho}_k-\tilde{\rho}_{k+1}) \dlim \Exp(1/k)$. 

As
in the previous theorem, we find it convenient to work with the order
statistics of the absolute correlation matrix, $V_1,V_2,\dots$.  On the event
$E_d$ that the first $d+1$ largest elements of $S$ share no indices, an event
which is defined below and shown to satisfy $\Pr(E_d) \to 1$, $\tilde{\rho}_k =
V_k$ for $k\le d$, so the results hold equally well for the
$\tilde{\rho}_1,\tilde{\rho}_2,\dots$ sequence of interest.

\begin{thm}\label{thm:globallatersteps}
  Let $V_1\ge V_2 \ge \cdots \ge V_{p(p-1)/2}$ be the order statistics of the
  absolute values of the correlations of $Z_1,\dots,Z_p\in\R^n$, where the $Z_i$ are
  i.i.d. with spherical distribution around 0.  Assume that Conjecture
  \ref{conjecture} holds.  Then for fixed $d\ge 0$ and $1
  \le k \le d$, $nV_k(V_k-V_{k+1})
  \dlim \Exp(1/k)$ as $n,p\to\infty$ with $\frac{\log p}{n} \to 0$.
\end{thm}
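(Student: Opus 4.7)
The plan is to generalize the partition/starred-event machinery of Theorem~\ref{thm:globalfirststep} from a single index pair to ordered $k$-tuples of pairs. For each ordered tuple $(\ij{1},\dots,\ij{k})$ of distinct index pairs, I would define $A_{\allij}$ to be the event that $|S_{\ij{1}}|,\dots,|S_{\ij{k}}|$ are the top $k$ absolute off-diagonals in that order, let $M^*_{\allij}$ be the maximum of $|S_{i'j'}|$ over pairs $(i',j')$ whose indices are disjoint from the $2k$ indices in the tuple, and let $A^*_{\allij}$ be the starred event requiring only $|S_{\ij{1}}|>\cdots>|S_{\ij{k}}|>M^*_{\allij}$. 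As in the $k=1$ case, the $A_{\allij}$ form an exact partition of the sample space, while the $A^*_{\allij}$ overcover by a negligible amount that should be controllable via a direct generalization of Lemma~\ref{lemma:astar}.

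Next I would introduce the event $E_d$ that the top $d+1$ absolute correlations have pairwise disjoint indices, so that on $E_d$ we have $\tilde\rho_m=V_m$ for all $m\le d$, and $V_{k+1}$ coincides with $M^*_{\allij}$ up to $O(\Pr(E_d^c))$ for the tuple that realizes the top $k$. A union-bound extension of Lemma~\ref{lemma:maximadisconnect} should give $\Pr(E_d^c)\to 0$ under $\log p/n\to 0$. Intersecting the decomposition with $E_d$ and $E_d^c$ in the same two-sided manner as in the proof of Theorem~\ref{thm:globalfirststep} then reduces the theorem to proving
\begin{align*}
\sum_{\allij}\Pr\!\left(A^*_{\allij}\cap\bigl\{|S_{\ij{k}}|\bigl(|S_{\ij{k}}|-M^*_{\allij}\bigr)>t/n\bigr\}\right)\longrightarrow e^{-kt}.
\end{align*}

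The heart of the argument is then the conditional calculation. On $A^*_{\allij}$, the independence of the underlying $Z_i$ combined with index-disjointness makes $|S_{\ij{1}}|,\dots,|S_{\ij{k}}|$ mutually independent and independent of $M^*_{\allij}$. Conditioning on $M^*_{\allij}=v$ and on all $k$ tuple entries exceeding $v$, these entries are i.i.d.\ copies of $|S_{ij}|$ truncated to $[v,\infty)$. The near-Gaussian tail of $\sqrt{n}|S_{ij}|$ makes the conditional density behave like $nv\,e^{-nv(x-v)}$ near the threshold, so $nv(V_k-v)$ converges in distribution to the minimum of $k$ independent $\Exp(1)$ variables, namely $\Exp(1/k)$. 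Integrating against the density of $M^*_{\allij}$, which by \cite{corrlimit} concentrates near $\sqrt{2\log p}/\sqrt n$, and summing over the roughly $p^{2k}$ ordered tuples against the single-pair survival scale $1/p^2$ should yield the desired $e^{-kt}$.

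The principal obstacle I anticipate is justifying the local Gaussian/exponential approximation uniformly across the full range of $v$ encountered in the integration; this is exactly where Conjecture~\ref{conjecture} must be invoked. One needs to discard the regime where $\sqrt{n}M^*_{\allij}$ falls substantially below the typical extremal level, since the estimate $nv\,e^{-nv(x-v)}$ degrades there. Cutting at the threshold $\sqrt{(4-2/(k+2))\log p}$ is precisely what Conjecture~\ref{conjecture} controls: the discarded mass is $o(1/p^{2k})$ and is therefore annihilated by the $p^{2k}$-scale combinatorial factor from the sum over tuples. On the surviving range of $v$, standard Mills-ratio bounds should make the approximation uniform and close the argument.
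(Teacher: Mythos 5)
Your proposal follows essentially the same route as the paper's proof: the same tuple-indexed exact and approximate partitions $A_{\allij}$ and $A^*_{\allij}$, the same disjointness event $E_d$, the same Mills-ratio local exponential approximation, and the same use of Conjecture~\ref{conjecture} to discard the regime where $\sqrt{n}M^*_{\allij}$ falls below $\sqrt{(4-2/(k+2))\log p}$; your ``minimum of $k$ i.i.d.\ $\Exp(1)$'' computation is just the conditional restatement of the paper's step converting the ordering event into an intersection of $k$ product events carrying a $1/k!$ factor. One small correction: $\sqrt{n}M_{n,p}$ concentrates near $\sqrt{4\log p}$ (the maximum over $\sim p^2$ near-Gaussian entries), not $\sqrt{2\log p}$, which is exactly why the cut just below $4\log p$ is the right one.
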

\begin{proof}
This proof follows nearly identically to the proof of Theorem
\ref{thm:globalfirststep}, using more general forms of the same results.  

Consider $T_k = nV_k(V_k-V_{k+1})$.  We can expand $\Pr(T_k > t)$ similarly to
the previous proof, but with $k$ greater than 1.  To do this, we need
a more complicated index set.  Define
\begin{align*}
  \Ic_k &= \left\{i_1,\dots,i_k,j_1,\dots,j_k \in \{1,\cdots,p\}: i_{\ell} <
  j_{\ell}\text{ and } (i_{\ell},j_{\ell}) \notin
  \bigcup_{\ell'<\ell}\{(i_{\ell'},j_{\ell'})\}, \forall \ell\right\}\\
  \Ic_k^* &= \left\{i_1,\dots,i_k,j_1,\dots,j_k \in \{1,\cdots,p\}: i_{\ell} <
  j_{\ell}\text{ and } i_{\ell},j_{\ell} \notin
  \bigcup_{\ell'<\ell}\{i_{\ell'},j_{\ell'}\}, \forall \ell\right\},
\end{align*}
so that $\Ic_k$ is the set of all selections of $k$ off-diagonal elements, and $\Ic_k^*$ is the set of all selections of $k$ off-diagonal elements
such that none share an index.

Then
\begin{align*}
  \Pr(T_k>t) &= \sum_{\Ic_k} \Pr\left(\{|S_{\ij{k}}|(S_{\ij{k}}-M_{\allij})>t/n\}\cap
  \{S_{\ij{1}}>\cdots > S_{\ij{k-1}}\}\right)
\end{align*}
where $M_{\allij} = \max_{i<j; (i,j)\notin\{(i_1,j_1),\cdots,(i_k,j_k)\}}
|S_{ij}|$.  Note that the function $x(x-M_{\allij})$ is increasing for
$x>M_{\allij}$, so the above expression can be rewritten as 
\begin{align*}
  \Pr(T_k>t) &= \sum_{\Ic_k} \Pr\left(|S_{\ij{1}}|(S_{\ij{1}}-M_{\allij}) > \cdots >
  |S_{\ij{k}}|(|S_{\ij{k}}|-M_{\allij}) > t/n\right)\\
  &= \frac{1}{k!}\sum_{\Ic_k} \Pr\left(\bigcap_{\ell \le k}
  \left\{|S_{\ij{\ell}}|(|S_{\ij{\ell}}|-M_{\allij})>t/n\right\}\right),
\end{align*}
where the $k!$ is included because the last event does not distinguish between
permutations of the labels.

As in the previous proof, we now define several events:
\begin{align*}
  E_d &= \left\{\text{The largest $d+1$ elements of $S$ share no indices}\right\}\\
  A_{\allij} &= \bigcap_{\ell \le k}\left\{S_{\ij{\ell}} > M_{\allij}\right\}\\
  A_{\allij}^* &= \bigcap_{\ell \le k}\left\{S_{\ij{\ell}} > M_{\allij}^*\right\}
\end{align*}
where $M_{\allij}^* = \max_{i<j\notin \bigcup_{\ell \le k}
\{i_{\ell},j_{\ell}\}} |S_{ij}|$, the maximum outside of any indices in
$\allij$.  

Then we can expand the above expression in terms of intersections with $E_d$
and $E_d^c$.  Note that
\begin{align*}
  \frac{1}{k!}\sum_{\Ic_k}\Pr\left(\bigcap_{\ell \le k}
  \left\{|S_{\ij{\ell}}|(|S_{\ij{\ell}}|-M_{\allij})>t/n\right\}\middle|E_d^c\right)
  &\le \frac{1}{k!}\sum_{\Ic_k}\Pr\left(A_{\allij}\mid E_d^c\right) = \Pr(E_d^c)
\end{align*}
since the $A_{\allij}$ are disjoint (up to permutations of the indices, which
is accounted for by the $\frac{1}{k!}$).  Similarly,
\begin{align*}
  \sum_{\Ic_k^*}\frac{1}{k!}\Pr\left(\bigcap_{\ell \le k}
  \left\{|S_{\ij{\ell}}|(|S_{\ij{\ell}}|-M^*_{\allij})>t/n\right\}\middle|E_d^c\right)
  &\le \frac{1}{k!}\sum_{\Ic_k^*} \Pr\left(A_{\allij}^*\cap E_d^c\right)\\
  \qquad \qquad \le \Pr(E_d^c) + \frac{1}{k!}\sum_{\Ic_k^*}\Pr\left(A_{\allij}^*\backslash
  A_{\allij}\right).
\end{align*}
Combining these, we obtain
\begin{align*}
  &\left|P(T_k > t) - \frac{1}{k!}\sum_{\Ic_k^*} \Pr\left(\bigcap_{\ell \le k}
  \left\{|S_{\ij{\ell}}|(|S_{\ij{\ell}}|-M^*_{\allij})>t/n\right\}\right)\right|\\
  &\qquad\le 2\Pr(E_d^c) + \sum_{\Ic_k^*}\Pr\left(A_{\allij}^*\backslash
  A_{\allij}\right)/k!.
\end{align*}
where we take advantage of the fact that on $E_d$, $M_{\allij}=M_{\allij}^*$,
and all the terms in the sum over $\Ic_k$ that are not in $\Ic_k^*$ vanish.

Then, by Lemma \ref{lemma:gaplimit} we can replace $\Pr\left(\bigcap_{\ell \le k}
  \left\{|S_{\ij{\ell}}|(|S_{\ij{\ell}}|-M^*_{\allij})>t/n\right\}\right)$,
  obtaining
\begin{align*}
  &\left|P(T_k > t) -  e^{-kt}\left(1+O\left(\frac{1}{\sqrt{\log p}}+\frac{\log
  p}{n}\right)\right) \frac{1}{k!}\sum_{\Ic_k^*} \Pr(A_{\allij}^*)\right|\\
  &\qquad\le 2\Pr(E_d^c) + \frac{1}{k!}\sum_{\Ic_k^*}\Pr\left(A_{\allij}^*\backslash
  A_{\allij}\right) + O\left(e^{-(p-2k)^{3/5}/4\sqrt{\log (p-2k)}}\right)
\end{align*}
By Lemma \ref{lemma:maximadisconnect}, $\Pr(E_d^c) \to 0$, and by Lemma
\ref{lemma:astar}, $\frac{1}{k!}\sum_{\Ic_k^*} \Pr(A_{\allij}^*) \to 1$.  Since
$\frac{1}{k!}\sum_{\Ic_k^*} \Pr(A_{\allij}) \to 1$ and $A_{\allij}\subset
A_{\allij}$,\\
\mbox{$\frac{1}{k!}\sum_{\Ic_k^*} \Pr(A_{\allij}^*\backslash P_{\allij})
\to 0$}.  Combining these, we obtain $P(T_k>t) \to e^{-kt}$, so $T_k\sim
\Exp(1/k)$.  
\end{proof}

The result of the previous theorems is that for a finite number of steps, the
corresponding test statistics have asymptotic distributions $T_k \dlim
\Exp(1/k)$ under the global null that no signal is present.  While this is a
comforting fact, some dependence structure is expected in many of the
situations where the graphical lasso is applied.  In the next section, we relax
the global null hypothesis to a weaker one where some correlation structure is
allowed.

\subsection{Null after signal selection}\label{section:details:aftersignal}
Instead of assuming no dependence structure as in the global null, suppose that
the true structure is restricted to a small subset of the nodes, $\Ac$.  By
this we mean that there exists a fixed-size subset, $\Ac\subset\{1,\dots,p\}$, such that the only
off-diagonal nonzeros in $\Sigma^{-1}$ occur in the $\Ac\times\Ac$ block.  

Then the test statistic $T_k = n\tilde{\rho}_k(\tilde{\rho}_k-\tilde{\rho}_{k+1})$ corresponds
to a test of a weaker null hypothesis.  If $\Vc_k$ is the set of variables
involved in the estimate by step $k$, then the null hypothesis at step $k$ is
that the signal variables have already appeared: $H_k: \Ac \subseteq \Vc_{k-1}$.

For this theorem, we have to be careful here to make sure that the signal on $\Ac$ is strong
enough that those variables are selected first.  Then the first step outside of
$\Ac\times \Ac$ is the same as the first step after the variables in $\Ac$ have
been selected.  The theorem presented here assumes
that the signal variables are strong enough to be selected first; it is followed by two simple conditions under which this
occurs.

We show in the following theorem that under the above condition, and defining $m$ to
be the last step inside $\Ac\times\Ac$, the first null test statistic $T_{m+1} =
\sqrt{n}\tilde{\rho}_{m+1}(\tilde{\rho}_{m+1}-\tilde{\rho}_{m+2}) \dlim \Exp(1)$ and
subsequent steps  $T_{m+k} = \sqrt{n}\tilde{\rho}_{m+k}(\tilde{\rho}_{m+k}-\tilde{\rho}_{m+k+1}) \dlim \Exp(1/k)$, for $1 < k \le d$ and $d$ finite.  

As this theorem relies on Theorems \ref{thm:globalfirststep} and \ref{thm:globallatersteps}, the first
step ($k=1$) is proven without Conjecture \ref{conjecture}, and the later steps
($k>1$) rely on the validity of Conjecture
\ref{conjecture}.  Again, we work with the order statistics $V_1,V_2,\dots$ of the correlation
matrix, as $V_{m+1},\dots,V_{m+d+1}$ are identical to
$\tilde{\rho}_{m+1},\dots,\tilde{\rho}_{m+d+1}$ on the event that the largest
$d+1$ elements of $S$ outside of $\Ac\times\Ac$ share no indices, which has probability limiting to 1.

\begin{thm}\label{thm:latersteps}
  Let $Z_1,\dots,Z_p$ be independent, and define $\Ac\subset \{1,\dots,p\}$ so
  that for $j\notin \Ac$, $Z_j$ are i.i.d. with spherical distribution around
  0.  Let $V_1\ge V_2 \ge \cdots \ge V_{p(p-1)/2}$ be the order statistics of
  the absolute values of the correlations of $Z_1,\dots,Z_p$.

  Let $B$ be the event that there exists a $\delta$ such that for all $i\in \Ac$,
  there exists $j\in\Ac$ such that $S_{ij} > \delta$, and furthermore $S_{ij} <
  \delta$ for all $i\notin \Ac$.  Let $m$ denote the number of steps taken
  before the first edge enters outside of $\Ac\times\Ac$.  If $\Pr(B)\to 1$, then as $n,p\to\infty$ with
  $\frac{\log p}{n}$, $nV_{m+k}(V_{m+k}-V_{m+k+1})\dlim \Exp(1/k)$,
  for $1\le k \le d$ and fixed $d\ge 0$.
\end{thm}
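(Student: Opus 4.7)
The plan is to reduce the claim to the global-null results (Theorems \ref{thm:globalfirststep} and \ref{thm:globallatersteps}) applied only to the \emph{noise correlations}, i.e., the $|S_{ij}|$ with $\{i,j\}\not\subseteq\Ac$. The reduction rests on two ingredients: the event $B$, which separates signal from noise by putting a fixed gap $\delta$ between the strong within-$\Ac$ edges and every other correlation; and the spherical symmetry of the vectors $Z_j$, $j\notin\Ac$, which lets us treat the noise correlations as coming from i.i.d.\ spherical vectors.

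I would work on the intersection of $B$ with the event (obtained by applying Lemma \ref{lemma:maximadisconnect} to the noise block of $S$) that the top $d+1$ noise correlations have disjoint indices; this intersection has probability tending to $1$. On it, the first $m$ steps of $\tilde{\rho}$ lie inside $\Ac\times\Ac$, and $\tilde{\rho}_{m+1},\ldots,\tilde{\rho}_{m+d+1}$ agree with the top $d+1$ order statistics of the noise correlations, which I denote $W_1\ge\cdots\ge W_{d+1}$; by the identification noted just before the theorem these also equal $V_{m+1},\ldots,V_{m+d+1}$. Next, I would condition on $\{Z_i:i\in\Ac\}$. Because each $Z_j$ with $j\notin\Ac$ is independent of the rest and spherically symmetric, rotational invariance implies that the joint conditional distribution of the noise correlations is identical to that of the corresponding correlations among $p$ i.i.d.\ spherical vectors (we effectively lose only the $|\Ac|=O(1)$ signal coordinates, which is asymptotically negligible under $(\log p)/n\to 0$). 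Applying Theorem \ref{thm:globalfirststep} for $k=1$ and Theorem \ref{thm:globallatersteps} for $1<k\le d$ (the latter under Conjecture \ref{conjecture}) to this equivalent fully-noise system then gives $nW_k(W_k-W_{k+1})\dlim\Exp(1/k)$, which is the desired conclusion.

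The main obstacle I anticipate is the careful identification of $V_{m+k}$ with the $k$-th noise order statistic $W_k$ on a set of probability tending to $1$. A priori, within-$\Ac$ correlations that are not used in the connecting sequence could interleave among the leading noise correlations in the overall ordering, breaking the desired equality. Either one works directly with $\tilde{\rho}$ (which automatically skips within-$\Ac$ loops once $\Ac$ is connected) or one augments $B$ with a further event controlling the ordering of the $\binom{|\Ac|}{2}$ within-$\Ac$ correlations relative to the leading noise correlations. Both routes succeed because $|\Ac|$ is fixed while the noise correlations are of order $\sqrt{(\log p)/n}$, so the probability of any troublesome ordering vanishes as $n,p\to\infty$.
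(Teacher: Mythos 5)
Your overall strategy is the same as the paper's: restrict to a high-probability event on which $V_{m+k}(V_{m+k}-V_{m+k+1})$ coincides with $\tilde V_k(\tilde V_k-\tilde V_{k+1})$ for the order statistics $\tilde V$ of the pure-noise block, then invoke Theorems \ref{thm:globalfirststep} and \ref{thm:globallatersteps}. The paper's event is $B\cap D\cap E_{\Ac,d}$, where $E_{\Ac,d}$ is your application of Lemma \ref{lemma:maximadisconnect} to the noise block and $D$ (from Lemma \ref{lemma:signalcasedisconnect}) is the event that the top $d+1$ correlations outside $\Ac\times\Ac$ involve no index in $\Ac$.

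One step of your argument is wrong as stated, though the gap is repairable and is exactly what Lemma \ref{lemma:signalcasedisconnect} exists to fill. You claim that, conditionally on $\{Z_i:i\in\Ac\}$, the joint law of all correlations $S_{ij}$ with $\{i,j\}\not\subseteq\Ac$ equals that of correlations among i.i.d.\ spherical vectors. That is false for the cross-block entries: for $i_1,i_2\in\Ac$ and $j\notin\Ac$, the pair $(S_{i_1j},S_{i_2j})$ has a joint conditional distribution depending on the (possibly large) inner product of $Z_{i_1}$ and $Z_{i_2}$, which does not match the near-orthogonal configuration of i.i.d.\ spherical vectors; in the extreme $Z_{i_1}=Z_{i_2}$ one gets $S_{i_1j}=S_{i_2j}$ exactly. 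Rotational invariance gives you only the correct \emph{marginals} for the cross-block entries. The correct repair is the paper's: there are only $|\Ac|(p-|\Ac|)=O(p)$ cross-block entries versus $\binom{p-|\Ac|}{2}=O(p^2)$ noise-block entries, so a union bound at level $\sqrt{3.5\log p/n}$ shows the top $d+1$ correlations outside $\Ac\times\Ac$ lie entirely in the noise block with probability tending to one, after which only the genuinely i.i.d.-spherical block remains and Theorems \ref{thm:globalfirststep} and \ref{thm:globallatersteps} apply directly. Your closing concern about within-$\Ac$ correlations interleaving with the leading noise correlations is legitimate (the event $B$ does not force \emph{all} within-$\Ac$ correlations above $\delta$), and the paper's write-up passes over it quickly; your proposed fixes (work with $\tilde\rho$, which skips within-component edges, or note that only finitely many such terms exist) are sound.
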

Note that sufficient conditions for $\Pr(B)\to 1$ are either the conditions for
recovery in \cite{ravikumar2008model}, or that each variable in $\Ac$ has at least one
true correlation with limit strictly greater than $\sqrt{\frac{4\log p}{n}}$.

\begin{proof}
  For simplicity of notation, rearrange the variables so that the variables in
  $\Ac$ are the first $|\Ac|$ variables.  Let $E_{\Ac,d}$ be the event that the first
  $d+1$ largest off-diagonal elements of $S$ outside of $\Ac\times \Ac$ share no
  indices, and let $D$ be the event that these $d$ elements have no indices in
  $\Ac$.  By Lemma \ref{lemma:maximadisconnect}, $\Pr(E_{\Ac,d})\to 1$, and by Lemma
  \ref{lemma:signalcasedisconnect}, $\Pr(D)\to 1$.  Therefore $\Pr(B\cap D\cap
  E_{\Ac,d}) \to 1$.

  On $B\cap D\cap E_{\Ac,d}$, $V_{m+k}(V_{m+k}-V_{m+k+1}) =
  \tilde{V}_k(\tilde{V}_k-\tilde{V}_{k+1})$, where $\tilde{V}_1\ge \cdots \ge
  \tilde{V}_{(p-|\Ac|)(p-|\Ac|-1)/2}$ are the order statistics of the
  correlations of variables not in $\Ac$.  Therefore, in general, 
  $V_{m+k}(V_{m+k}-V_{m+k+1}) =
  \tilde{V}_k(\tilde{V}_k-\tilde{V}_{k+1}) + o_P(1)$.  Note that Theorems
  \ref{thm:globalfirststep} and \ref{thm:globallatersteps} apply to
  $\tilde{V}_1,\dots,\tilde{V}_k$, so the rest of the proof follows.
\end{proof}

Note that for this theorem, we just consider the first steps outside of
$\Ac\times\Ac$.  If there are several disconnected components in $\Ac$, it is
more interesting to consider the first step outside of any connected components
in $\Ac\times\Ac$.  However, since $\Ac$ is finite and the number of steps $d$ considered below is
finite, the probability that this case differs from the one addressed goes to
zero (similar to Lemma \ref{lemma:signalcasedisconnect}), so the asymptotic distributions are the same.

\section{Simulations}\label{section:simulations}
We explore the null distribution of the covariance statistic through
simulations.  These are carried out under the global null, where there is no
correlation between the variables, and the weaker null, where correlation
is present only on a subset $\Ac$ of the variables.

\subsection{Simulations under the global null}
We generate vectors $X_1,\dots,X_n \in \R^p$ as $X_i\iid N(0,I_p)$.
Here $n=500$, $p=100$.  The correlation matrix then has entries
$S_{ij} = X_i^TX_j/||X_i||_2||X_j||_2$, after first centering the $X_i$.  The
simulations are all repeated 1000 times.

\begin{figure}[h]
  \centering
  \includegraphics[width=5in]{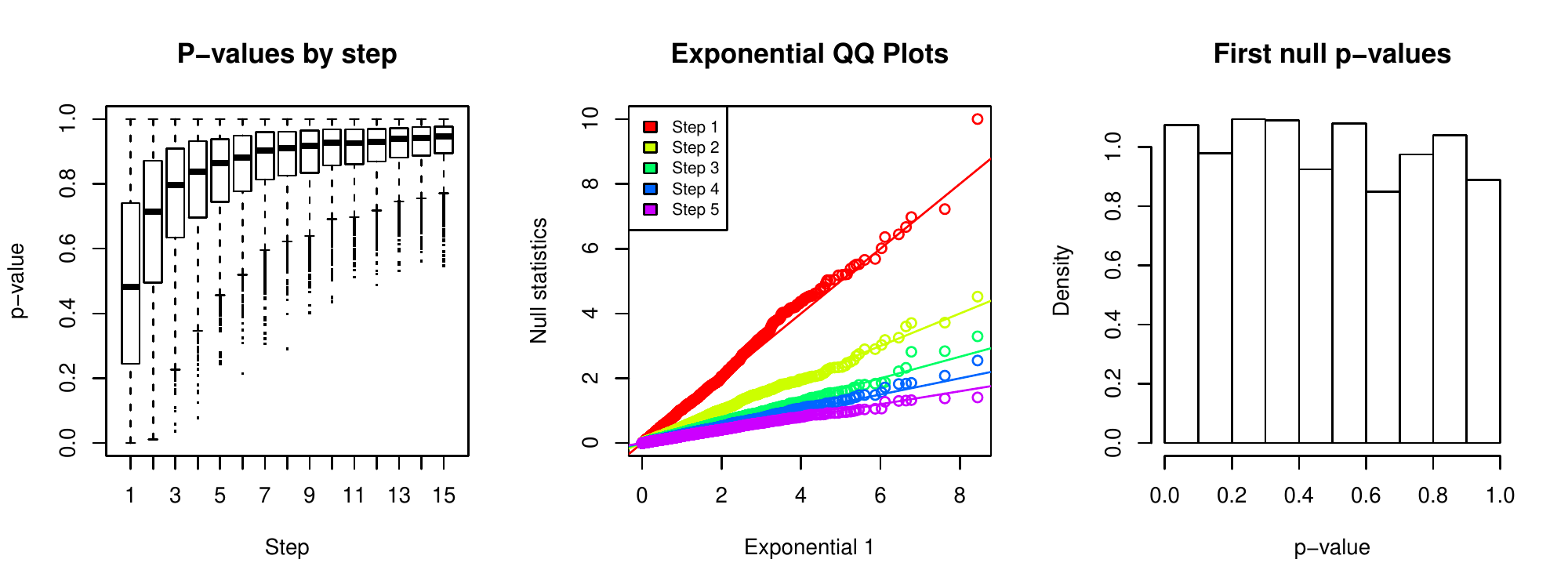}
  \caption{Distribution of test statistics in the global null setting, with
  $n=500, p=100$.  The left panel shows boxplots of the $p$-values, where each
  statistic is compared to the CDF of an $\Exp(1)$ distribution.  Note that
  this is conservative after the first null step. The middle panel shows
  $\Exp(1)$ QQ-plots for the first five steps, demonstrating that the
  distribution is very nearly exponential and has mean $1/k$ for the $k^{th}$
  step.  The right plot shows a histogram of the $p$-values at the first
  step, demonstrating them to be reasonably uniform.}
  \label{fig:globalnullsim}
\end{figure}

Figure \ref{fig:globalnullsim} considers the corresponding covariance statistics.
Only $k=1,\dots,5$ are shown, for simplicity.  The first plot shows box plots of the $p$-values $1-F_1(T_k)$, where each $T_k$ is compared to
the CDF for an $\Exp(1)$ distribution.  Note that this is conservative for all
$k>1$, leading the later $p$-values to quickly approach 1.
The second plot compares the
distributions of $T_1,\dots,T_5$ to an $\Exp(1)$ distribution in a QQ plot.  On
this plot, a line with slope $1/k$ corresponds to an exponential distribution
with mean $1/k$. The last plot shows a histogram of the
$p$-value for the first step, showing it to be close to uniform.

\begin{figure}[h]
  \centering
  \includegraphics[width=3.5in]{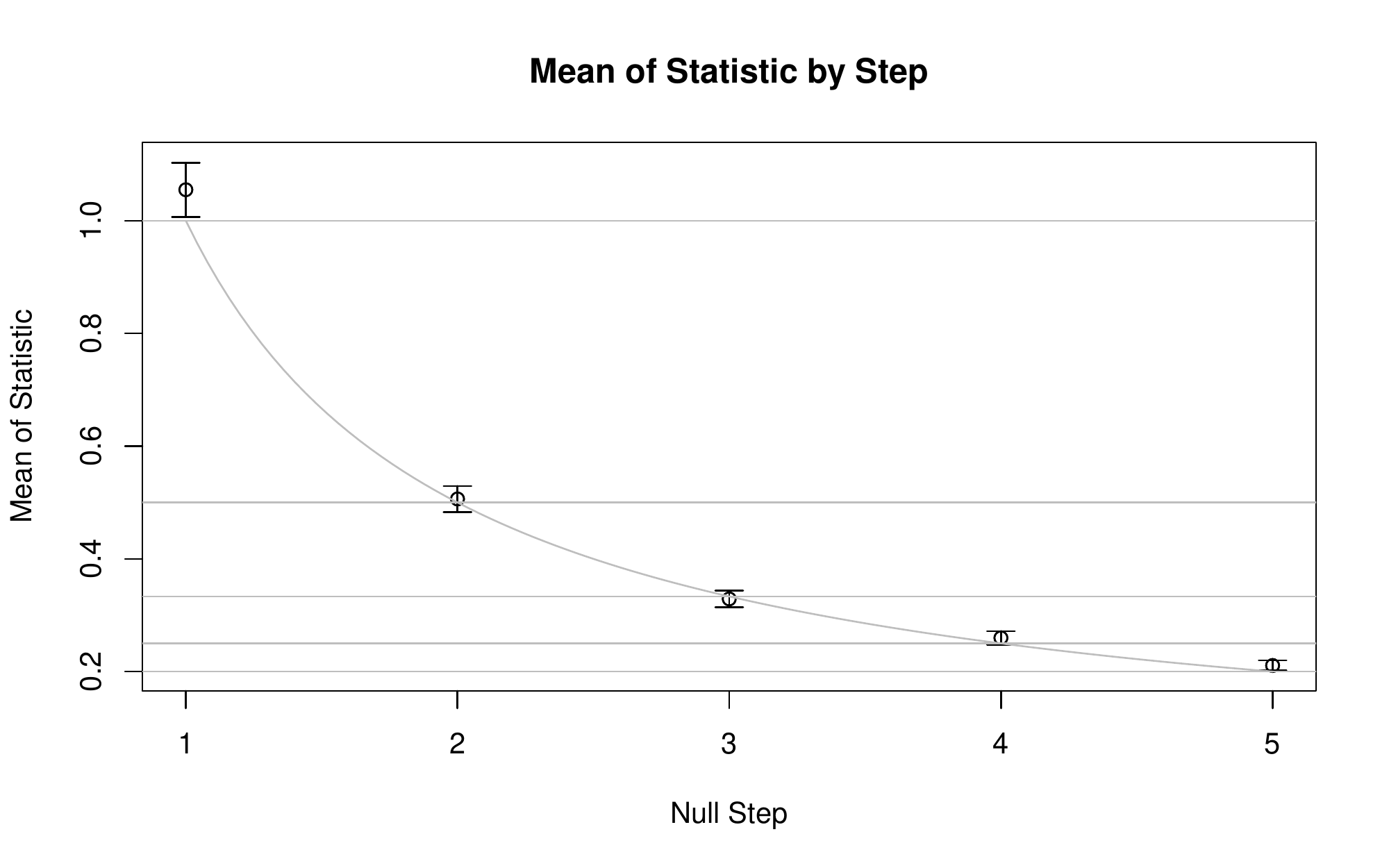}
  \caption{Confidence intervals for the means of the statistics at the first
  five null steps.  These agree with the theoretical prediction that
  the $k^{th}$ step should have mean $1/k$.}
  \label{fig:statmeans}
\end{figure}

To more easily see that the statistic means at step $k$ are matching the
theoretical value of $1/k$, Figure \ref{fig:statmeans} shows 95\% confidence intervals
for the means of the test statistics for the first five steps, based on 1000
realizations.  We see that the empirical means are in agreement with the
theoretical predictions.

\subsection{Simulations with signal, $H_{\Ac}$}
In this simulation, we select a subset $\Ac\subset\{1,\dots,p\}$ of size
$|\Ac|=6$.  The covariance matrix $\Sigma$ is then designed so that the
$\Ac\times\Ac$ block has some correlation structure, while the rest of the
covariance matrix remains diagonal.  We consider two different structures on
$\Ac$:
\begin{enumerate}
  \item \emph{Disconnected Pairs:} The variables
    in $|\Ac|$ are paired, and $\Sigma^{-1}$ is large on those pairs.
  \item \emph{Clique:} All the entries in the $\Ac\times\Ac$ block of
    $\Sigma^{-1}$ are made large.
\end{enumerate}

\begin{figure}[h!]
  \centering
  \includegraphics[width=5in]{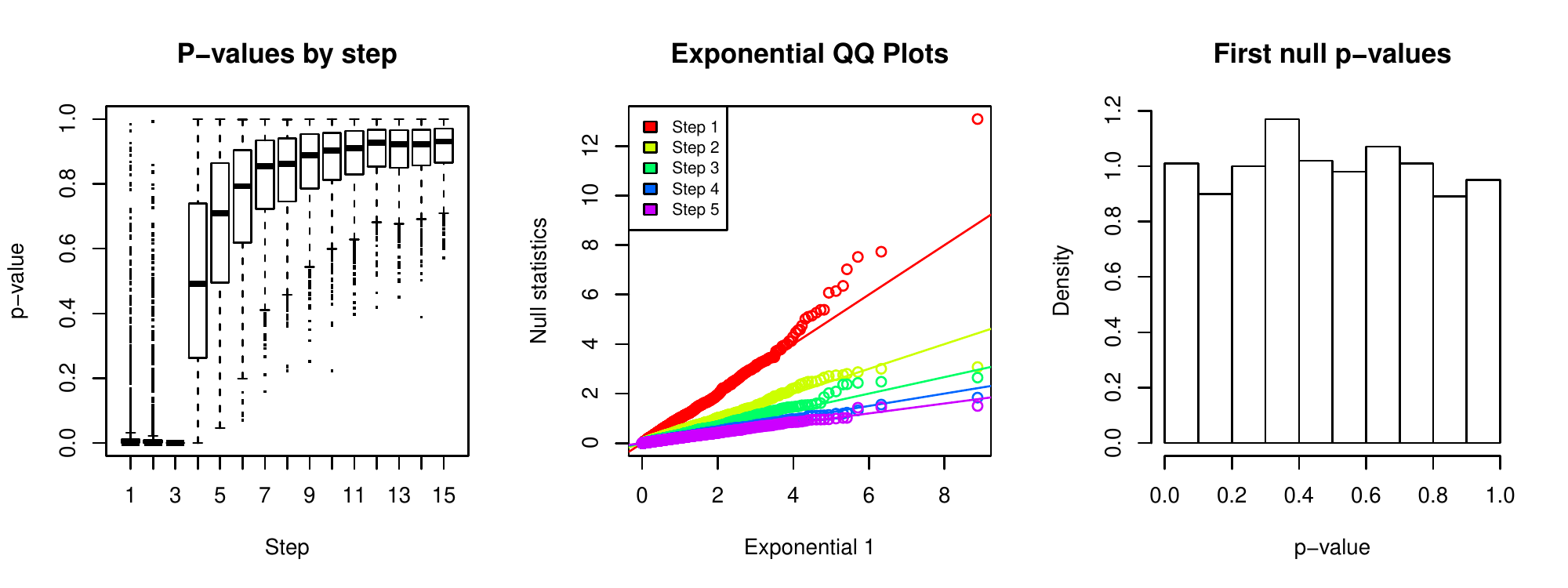}
  \caption{Distribution of test statistics under the alternative of three
  highly correlated pairs of variables, with $n=500$ and $p=100$.  The left
  panel shows boxplots of the $p$-values, where each statistic is compared to
  the CDF of an $\Exp(1)$ distribution.  Note that this is conservative after
  the first null step. The middle panel shows $\Exp(1)$ QQ-plots for the first
  five null steps, demonstrating that the distribution is very nearly
  exponential and has mean $1/k$ for the $k^{th}$ null step.  The right plot shows a histogram of the
  $p$-values at the first
  null step, demonstrating them to be reasonably uniform.}
  \label{fig:pairsim}
\end{figure}

\begin{figure}[h!]
  \centering
  \includegraphics[width=5in]{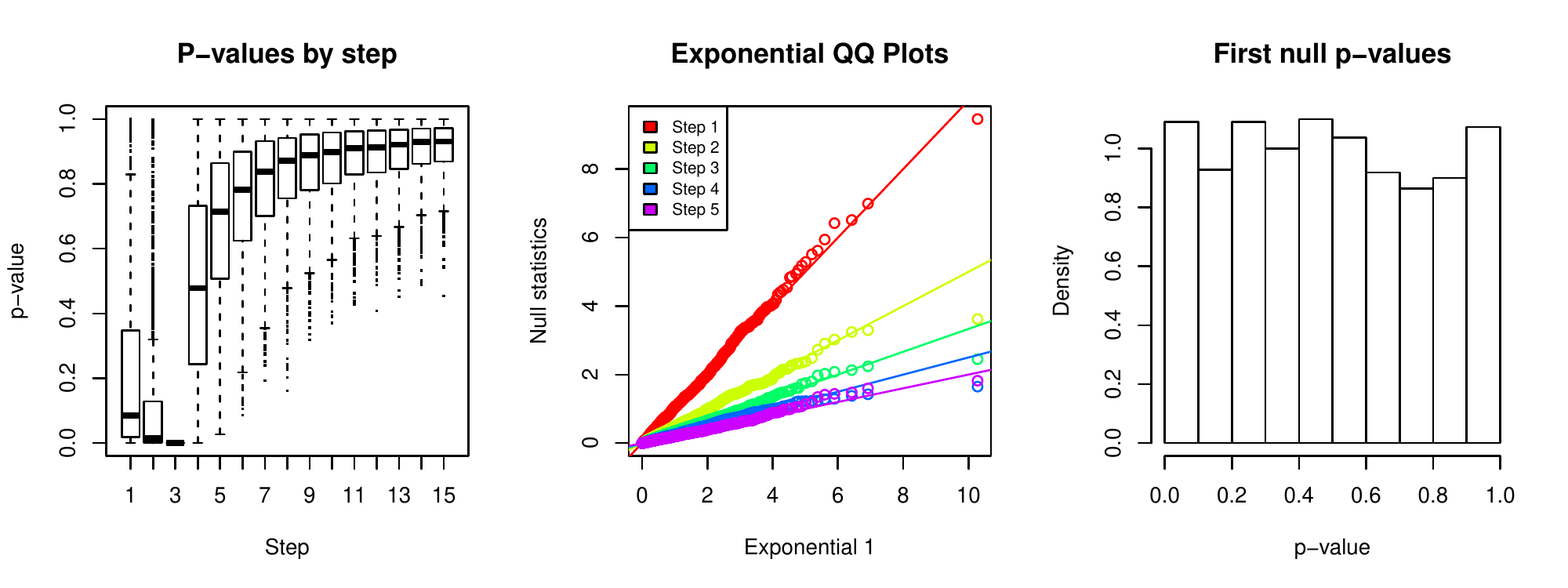}
  \caption{Distribution of test statistics under the alternative of a clique of
  6 highly correlated variables, with $n=500$ and $p=100$.  The left panel
  shows boxplots of the $p$-values, where each statistic is compared to the CDF
  of an $\Exp(1)$ distribution.  Note that this is conservative after the first
  null step. The middle panel shows $\Exp(1)$ QQ-plots for the first five null
  steps, demonstrating that the distribution is very nearly exponential and has
  mean $1/k$ for the $k^{th}$ null step.  The right plot shows a histogram of
  the $p$-values at the first null step, demonstrating them to be reasonably
  uniform.}
  \label{fig:cliquesim}
\end{figure}

The purpose of this section is to investigate whether the statistics retain the desired
null behavior as edges are selected outside of the signal.  Figures \ref{fig:pairsim} and \ref{fig:cliquesim} demonstrate the behavior of our
test statistic under scenarios 1 and 2, respectively.  These simulations are
both run for the case $n=500$ and $p=100$.  The panels of each plot describe
the same quantities as in Figure \ref{fig:globalnullsim}.  

Note that the statistics exhibit the expected behavior: The statistic for the
$k^{th}$ null step appears to closely follow an exponential distribution with
mean $1/k$, as suggested by the theory
in Section \ref{section:details:aftersignal}.

\section{Connection to clustering}

The theoretical results from this paper apply directly to the particular
sequence $\tilde{\rho}_1,\dots,\tilde{\rho}_M$ of absolute correlations
described in Section \ref{glassotest:path}.  This allows the results from this
paper to be applied to other statistical methods which rely on the same sequence.  Hierarchical single-linkage clustering based
on correlations is one of these methods.

\begin{figure}[h!]
  \centering
  \includegraphics[width=3.5in]{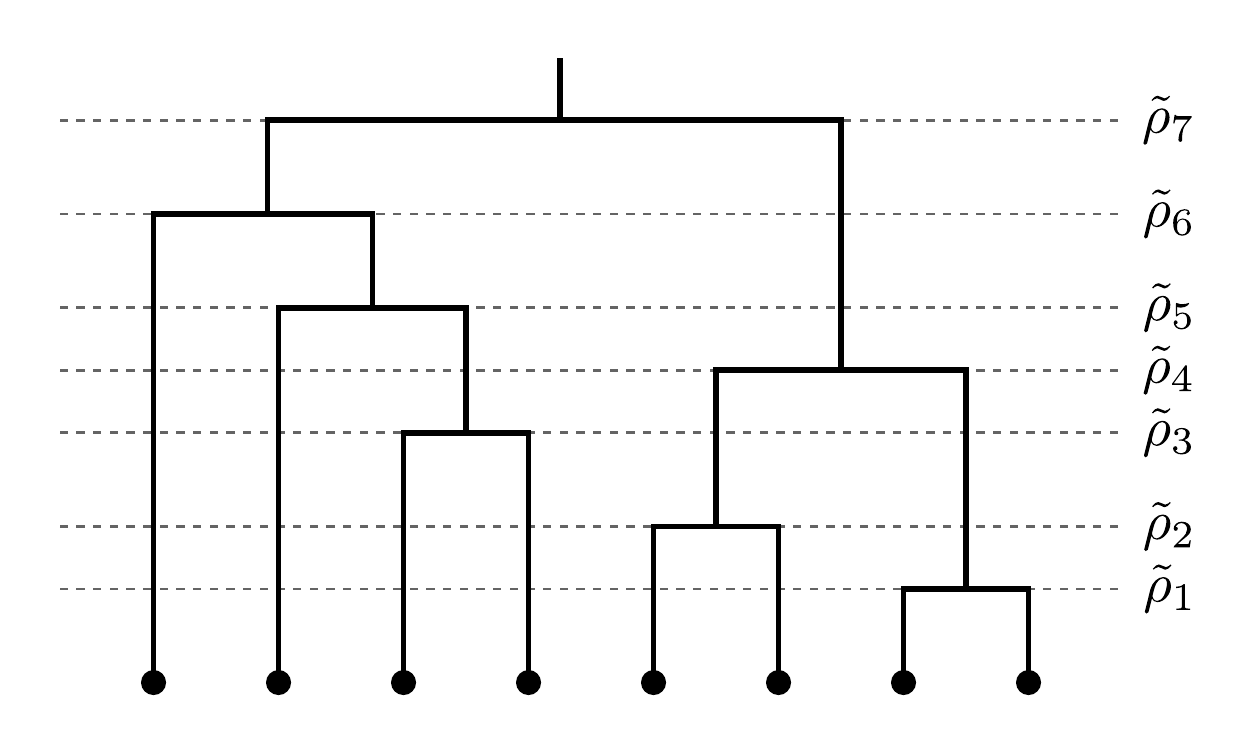}
  \caption{Illustration of single linkage clustering of correlations and the
  correspondence of levels in the tree to $\tilde{\rho}_1,\tilde{\rho}_2,\dots$
  in the graphical lasso setting.  Because of this correspondence, the
  theoretical results from this paper also apply to single linkage clustering
  based on correlations.}
  \label{fig:hiertree}
\end{figure}

Consider single linkage clustering of variables, based on their absolute
pairwise correlations.  Starting with all the variables disconnected, variable
groups are merged at the level of the tree equal to the largest correlation
between the two groups.  The subtrees that arise in such a fashion correspond
exactly to the connected components in the graphical lasso path.  The sequence
of levels at which the merges occur is exactly the sequence
$\tilde{\rho}_1,\dots,\tilde{\rho}_M$, illustrated in Figure
\ref{fig:hiertree}.  As a result, all the theoretical results from this paper
apply to this clustering setting as well.

This connection implies that the test statistic $T_k$ corresponds to the $k^{th}$ merge in
the tree generation process, and it is a function of the height of the tree at
that merge and at the following merge.  The null hypotheses being tested by
$T_k$ is the hypothesis that there are no correlated variables in separate
subtrees at this level of the tree or above.  The statistic $T_k$ will have the
same asymptotic exponential null distribution that we proved in section
\ref{section:details}.

\section{Discussion}\label{section:discussion}

The graphical lasso provides an attractive way to estimate sparse inverse
covariance matrices.  However, approaches to integrating hypotheses testing and
inference into this setting have not yet been developed.  Taking inspiration
from recent results from \cite{lockhart2013significance} on inference in lasso
estimation, this paper constructs a series of hypotheses and test statistics along the
graphical lasso path.

For these test statistics, simple asymptotic null distributions are proven under the
corresponding sequence of hypotheses.  It is also demonstrated empirically that
these asymptotic distributions provide a good approximation to the finite
sample distributions of these statistics, in several signal settings.  Finally,
extensions of these results are made to other correlation-based methods; in
particular, single-linkage clustering based on absolute correlations.

The approach presented in this paper tests the hypotheses that all the variables that should
be connected have been connected, in the sense that they are in the same
connected component.  Because of the subset of graphical lasso knots that are
used, no statements can be made about the internal structure of the estimate
on each connected component.

Similarly, we have only made statements about the distribution of this
statistic under the null distribution.  For practical application of this test,
it is important to also understand how it behaves under the alternative.  Based
on our empirical demonstrations, the statistic tends to be large in the
presence of strong signal, leading to very small $p$-values when compared to
the $\Exp(1)$ distribution.  This makes sense intuitively, as
$n\tilde{\rho}_k(\tilde{\rho}_k-\tilde{\rho}_k)$ should grow large as
$\tilde{\rho}_k, \tilde{\rho}_{k+1}$ limit to finite, nonzero true correlations.

\begin{figure}[h!]
  \centering
  \includegraphics[width=3in]{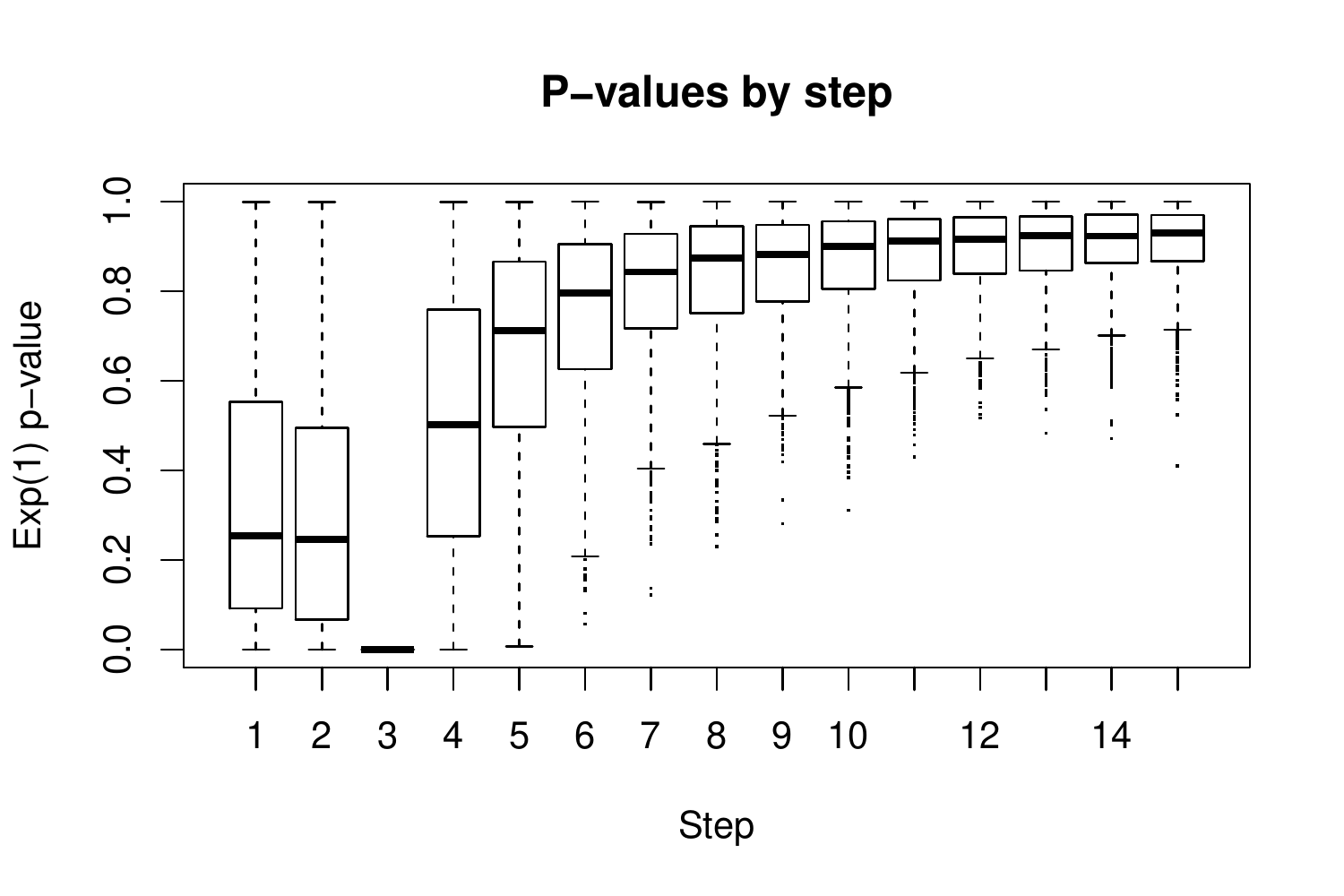}
  \caption{Simulation results when there are three highly correlated pairs of
  variables with exactly the same true nonzero correlation.  Boxplots of the $p$-values
  are shown, where each statistic is compared to the CDF
  of an $\Exp(1)$ distribution.  Note that the third and later steps behave as
  expected, but the $p$-values at the first two steps are larger than in Figure
  \ref{fig:pairsim}.}
  \label{fig:breaksim}
\end{figure}

An special case of this occurs if two strong correlations have the same true
nonzero value.  Then the difference $\tilde{\rho}_k-\tilde{\rho}_k$ becomes
smaller as $n$ increases, slowing the growth of the statistic.  The result is
that the statistic grows at rate $\sqrt{n}$, rather than the usual rate $n$.
In practice, this can be seen with finite sample sizes and close nonzero
correlations.  This is illustrated in Figure \ref{fig:breaksim}, where the
simulation from Figure \ref{fig:pairsim} is repeated with an alternative of
three pairs with exactly the same strong correlation. There
the third step and all the null steps behave as before.  However, the
statistics for the first two non-null steps are quite large.  This same
behavior appears in the equivalent regression test with the lasso, and research
is underway in determining good ways to handle this.

Finally, our eventual hope is to use results like those presented here and in
\cite{lockhart2013significance} to construct stopping rules for regularized
regression methods like the lasso and the graphical lasso.  The very strong
similarities between the forms of the test statistics and null distributions
between these two settings suggest that an overarching approach can be
constructed that will apply broadly across these methods.

\section*{Acknowledgements}
The authors would like to thank Noah Simon for pointing out the connection
between the graphical lasso and hierarchical clustering, and Jacob Bien for
helpful feedback.

\appendix

\section{Supporting Lemmas}

This appendix contains the supporting lemmas for Theorems 1,2 and 3 from the
paper, along with their proofs.

Throughout this section, we use the following notation:
\begin{itemize}
  \item $f_n(x) = c_n\left(1-\frac{x^2}{n}\right)^{(n-4)/2}$, supported on
    $[0,\sqrt{n}]$, where $c_n =
    \frac{2}{\sqrt{\pi}}\frac{\Gamma\left(\frac{n-1}{2}\right)}{\Gamma\left(\frac{n-2}{2}\right)}
    = \sqrt{\frac{2}{\pi}}\left(1+O\left(\frac{1}{n}\right)\right)$.  This is
    the marginal density of $\sqrt{n}|S_{ij}|$.
  \item $F_n(x) = \int_0^x f_n(u)du$ and $\bar{F}_n(x) = \int_x^{\sqrt{n}}
    f_n(u)du$.
  \item $G_{n,p}(x) = \Pr(\sqrt{n}M_{n,p} < x)$, where $M_{n,p}$ is the maximum of a
    $p\times p$ correlation matrix of spherically distributed variables (as in
    Theorem 1).  $g_{n,p}(x)$ is the corresponding density.
  \item $f(x) = O(g(x))$ if and only if $\exists M\in \R^+$, $x_0\in\R$ such that $|f(x)| \le
M|g(x)|$ for all $x>x_0$.
\end{itemize}

\subsection{Bounds on marginal distributions}

This section presents results bounding the tails of the marginal distributions
for $\sqrt{n}|S_{ij}|$ and $\sqrt{n}M_{n,p}$.  Lemma \ref{lemma:mills} gives a
Mills ratio bound for the tail of $\sqrt{n}|S_{ij}|$, and Lemma
\ref{lemma:exp_limit} gives an important consequence of it.  Lemmas
\ref{lemma:Mexplowerbound} and \ref{lemma:chebyshev} give bounds on the
distributions of large elements of the correlation matrix.  Lemma
\ref{lemma:gaplimit} combines these elements to provide an important result for
proving the theorems in the paper.

\begin{lemma}
  \label{lemma:mills}
  Let $f_n$ be the density $f_n(x) =
\frac{2}{\sqrt{n\pi}}\frac{\Gamma(\frac{n-1}{2})}{\Gamma(\frac{n-2}{2})}\left(1-\frac{x^2}{n}\right)^{(n-4)/2}$
supported on $\left[0,\sqrt{n}\right]$ and let $\bar{F}_n(x) =
\int_x^{\sqrt{n}}f_n(w)dw$.
Then for $n\ge 3$, 
\begin{align}
  \frac{\bar{F}_n(x)}{f_n(x)} &\le \frac{n}{n-2} \cdot \frac{1}{x}
  \left(1-\frac{x^2}{n}\right) \label{upperbound} & \text{for }x\in (0,\sqrt{n})\\
  \frac{\bar{F}_n(x)}{f_n(x)} &\ge \frac{n+1}{n-2}\cdot \frac{x}{x^2+1}
  \left(1-\frac{x^2}{n}\right) & \text{for }x \in (a_n,\sqrt{n}) \label{lowerbound}
\end{align}
where $a_n$ are constants satisfying $a_n\le \sqrt{3/5}$.
\end{lemma}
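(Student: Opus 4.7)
The plan is to reduce both inequalities to a single integrating-factor identity. Differentiating $\log f_n(x)$ gives the simple ODE
\begin{align*}
  \frac{f_n'(x)}{f_n(x)} = -\frac{(n-4)x}{n-x^2}.
\end{align*}
For any smooth $\phi$ with $\phi(\sqrt{n}) f_n(\sqrt{n}) = 0$, integrating $(\phi f_n)'$ from $x$ to $\sqrt{n}$ yields
\begin{align*}
  \phi(x) f_n(x) = \int_x^{\sqrt{n}} \psi(u)\, f_n(u)\, du, \qquad \psi(u) := -\phi'(u) - \phi(u)\, f_n'(u)/f_n(u).
\end{align*}
Whenever $\psi \ge 1$ pointwise this gives $\bar{F}_n / f_n \le \phi$, and whenever $\psi \le 1$ pointwise it gives the reverse inequality. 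The whole content of the lemma is therefore the right choice of the ansatz $\phi$.

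For the upper bound, the natural guess is $\phi(x) = \tfrac{n}{n-2}\cdot \tfrac{1-x^2/n}{x}$. A short calculation with the ODE above gives
\begin{align*}
  \psi(x) = \frac{n-3}{n-2} + \frac{n}{(n-2)x^2},
\end{align*}
which is manifestly $\ge 1$ throughout $(0,\sqrt{n})$ (the condition $\tfrac{n}{x^2} \ge 1$ is just $x^2 \le n$). This delivers the upper inequality immediately and with no case analysis.

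For the lower bound I would mimic the classical Komatu lower Mills ratio bound for the normal and try $\phi(x) = \tfrac{n+1}{n-2}\cdot \tfrac{x(1-x^2/n)}{x^2+1}$. The computation of $\psi$ is more involved but still purely algebraic, and after clearing denominators the condition $\psi \le 1$ reduces to a quartic inequality equivalent to $3x^4 - (5n-1)x^2 + n(2n-1) \ge 0$, which factors conveniently as $3(x^2 - n)(x^2 - (2n-1)/3)$. Analyzing the sign of this factorization on the support $[0,\sqrt{n}]$ pins down the admissible range of $x$ and produces the threshold $a_n$; the estimate $a_n \le \sqrt{3/5}$ then follows by a direct numerical comparison of the roots.

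The main obstacle will be locating the lower-bound ansatz. The prefactor $\tfrac{n+1}{n-2}$ is not arbitrary: it is calibrated so that $\psi(\sqrt{n}) = 1$, making the inequality tight at the right endpoint and yielding the strongest bound of this form. Once the ansatz is in hand, the remainder is routine bookkeeping --- expanding, clearing denominators, factoring the quartic, and controlling $a_n$.
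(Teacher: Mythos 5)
Your upper-bound argument is correct and complete: with $\phi(x)=\tfrac{n}{n-2}\cdot\tfrac{1-x^2/n}{x}$ one indeed gets $\psi(x)=\tfrac{n-3}{n-2}+\tfrac{n}{(n-2)x^2}\ge 1$ on all of $(0,\sqrt n)$, and the boundary term $\phi(\sqrt n)f_n(\sqrt n)$ vanishes for every $n\ge 3$. This is an integrated, more elementary version of what the paper does (the paper invokes the monotone form of L'Hopital's rule from its cited Mills-ratio reference, applied to $g_1=\bar F_n'/[h_1f_n]'$), and for the upper bound your route is arguably cleaner.

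The lower bound is where the proposal breaks. Your factorization $3x^4-(5n-1)x^2+n(2n-1)=3(x^2-n)\bigl(x^2-\tfrac{2n-1}{3}\bigr)$ is correct, but you have misread its sign: on the support $x^2-n\le 0$, so the quartic is nonnegative --- equivalently $\psi\le 1$ --- precisely when $x^2\le\tfrac{2n-1}{3}$, and $\psi>1$ on the terminal interval $\bigl(\sqrt{(2n-1)/3},\sqrt n\bigr)$. Since your identity integrates $\psi f_n$ over the entire tail $(x,\sqrt n)$, every such tail contains this bad region, so the pointwise comparison never closes; the cutoff your analysis produces is an \emph{upper} threshold of order $\sqrt n$, not the lower threshold $a_n\le\sqrt{3/5}$ of the statement (in the paper, $a_n$ is the root of $(n-3)x^4+(2n-1)x^2-n=0$, i.e.\ where $\psi=0$, a different crossing). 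Worse, on the bad region the argument runs backwards: $H(x):=\phi(x)f_n(x)-\bar F_n(x)$ satisfies $H'=f_n(1-\psi)<0$ there and $H(\sqrt n)=0$, forcing $H>0$, i.e.\ the claimed inequality \emph{reverses} near $\sqrt n$ (e.g.\ at $n=5$, $x=2$ one computes $\bar F_5(2)/f_5(2)\approx 0.1591<0.16=\phi(2)$). So no amount of bookkeeping along your route recovers the bound on all of $(a_n,\sqrt n)$; the paper's mechanism for the lower bound is the monotone L'Hopital rule applied to $g_2=\bar F_n'/[h_2 f_n]'$, which is a genuinely different and non-pointwise tool, and that is the idea missing from your proposal. (The inequality, and your pointwise method, are only salvageable for $x$ bounded away from $\sqrt n$ --- e.g.\ $x=O(\sqrt{\log p})$ with $\log p/n\to 0$, which is the only regime in which the paper ever applies the lower bound.)
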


\begin{proof}
  This proof follows the approach of \cite{millsproof}.  Define $r_n(x) =
  \bar{F}_n(x)/f_n(x)$.

  To prove (\ref{upperbound}), define $h_1(x) =
  \frac{1}{x}\left(1-\frac{x^2}{n}\right)$.  Note that $h_1(x)f_n(x) \propto
 \frac{1}{x}\left(1-\frac{x^2}{n}\right)^{(n-2)/2}$ is strictly decreasing and
nonzero on $(0,\sqrt{n})$.  Also note that
$\lim_{x\to\sqrt{n}} \bar{F}_n(x) = 0$ and $\lim_{x\to\sqrt{n}} h_1(x)f_n(x) = 0$.

Thus, by L'Hopital's rule,
\begin{align*}
  \lim_{x\to\sqrt{n}} \frac{r_n(x)}{h_1(x)} =
  \lim_{x\to\sqrt{n}}\frac{\bar{F}_n(x)}{h_1(x)f_n(x)} = \lim_{x\to\sqrt{n}}
  \frac{\bar{F}'_n(x)}{[h_1(x)f_n(x)]'}.
\end{align*}

Defining $g_1(x) \equiv \frac{\bar{F}'_n(x)}{[h_1(x)f_n(x)]'}$, we have
\begin{align*}
  g_1(x) = \frac{-f_n(x)}{f'_n(x)h_1(x) + f_n(x)h_1'(x)} =
  \frac{1}{\frac{n-3}{n}+\frac{1}{x^2}},
\end{align*}
and $\lim_{x\to\sqrt{n}}g_1(x) = \frac{n}{n-2}$.

Therefore, $\lim_{x\to\sqrt{n}} r_n(x)/h_1(x) = \frac{n}{n-2}$.
Furthermore, by Lemma 2.1 of \cite{millsproof}, since $g_1(x)$ is strictly
increasing on $(0,\sqrt{n})$, $r_n(x)/h_1(x)$ is also strictly increasing on that
interval.  Combining these facts gives $r_n(x)/h_1(x) \le \frac{n}{n-2}$ or
equivalently
\begin{align*}
  r_n(x) = \frac{\bar{F}_n(x)}{f_n(x)} \le \frac{n}{n-2}\cdot
  \frac{1}{x}\left(1-\frac{x^2}{n}\right)
\end{align*}
for $x\in(0,\sqrt{n})$.

To prove (\ref{lowerbound}), we similarly define $h_2(x) =
\frac{x}{1+x^2}\left(1-\frac{x^2}{n}\right)$.  Now $h_2(x)f_n(x)$ nonzero with
constant derivative on an interval $(a_n,\sqrt{n})$ for $n\ge 3$, with $a_3 =
\sqrt{3/5}$ and $a_n = \sqrt{\frac{\sqrt{8n^2-16n+1}-2n+1}{2(n-3)}}$ for $n\ge
4$.  In particular, $a_n \le
\sqrt{3/5}$ for all $n\ge 3$, $a_n$ is decreasing in $n$, and
$\lim_{n\to\infty} a_n = \sqrt{\sqrt{2}-1}$.  Furthermore, $\lim_{x\to\sqrt{n}}
h_2(x)f_n(x) = 0$.

We now have
\begin{align*}
  g_2(x) = \frac{\bar{F}'_n(x)}{[f_n(x)h_2(x)]'} =
  \frac{(1+x^2)^2}{\frac{n-3}{n}\cdot x^4 + \frac{2n-1}{n}\cdot x^2 - 1},
\end{align*}
where $g_2(x)$ is strictly decreasing on $(a_n,\sqrt{n})$ and
$\lim_{x\to\sqrt{n}}g_2(x) = \frac{n+1}{n-2}$.

Then, by L'Hopitals rule, $\lim_{x\to\sqrt{n}} r_n(x)/h_2(x) =
\lim_{x\to\sqrt{n}} g_2(x) = \frac{n+1}{n-2}$, and furthermore by Lemma 2.1 of
\cite{millsproof}, $r_n(x)/h_2(x)$ is strictly decreasing on $(a_n,\sqrt{n})$.
Therefore $r_n(x)/h_2(x) \ge \frac{n+1}{n-2}$ on that interval, so
\begin{align*}
  r_n(x) = \frac{\bar{F}_n(x)}{f_n(x)} \ge
  \frac{n+1}{n-2}\cdot\frac{x}{x^2+1}\left(1-\frac{x^2}{n}\right)
\end{align*}
for $x \in (a_n,\sqrt{n})$.
\end{proof}

\begin{lemma}
  \label{lemma:exp_limit}
  For $\sqrt{n}|S_{ij}| \sim f_n(x)$, $t>0$, and $x\in (\sqrt{\log p},\sqrt{n})$,
  as $n,p\to\infty$ with $\frac{\log p}{n} \to 0$ we have
  \begin{align*}
    \frac{\Pr\left(n^{1/2}|S_{ij}|\left(n^{1/2}|S_{ij}|-x\right) \ge t\right)}{\Pr\left(n^{1/2}|S_{ij}|\ge
    x\right)} &= e^{-t}\left(1+O\left(\frac{1}{\sqrt{\log p}} + \frac{\log
    p}{n}\right)\right)
  \end{align*}
  and
  \begin{align*}
    \frac{\Pr\left(n^{1/2}|S_{ij}|\ge x +
    \frac{t}{x}\right)}{\Pr\left(n^{1/2}|S_{ij}|\ge
    x\right)} &= e^{-t}\left(1+O\left(\frac{1}{\sqrt{\log p}} + \frac{\log
    p}{n}\right)\right).
  \end{align*}
\end{lemma}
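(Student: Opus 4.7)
The plan is to reduce both statements to estimating a single tail-ratio $\bar{F}_n(y)/\bar{F}_n(x)$ for $y$ close to $x+t/x$, and then to evaluate this ratio by combining the Mills-type bounds of Lemma~\ref{lemma:mills} with an explicit Taylor expansion of $f_n$.

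For Part 2 the reduction is immediate since $\Pr(\sqrt{n}|S_{ij}|\ge x+t/x) = \bar{F}_n(x+t/x)$. For Part 1, write $Y=\sqrt{n}|S_{ij}|$ and note that $\{Y(Y-x)\ge t\} = \{Y\ge y^\ast\}$ where $y^\ast = (x+\sqrt{x^2+4t})/2$ is the positive root of $Y^2-xY-t=0$. Rationalizing gives $y^\ast - x = 2t/(x+\sqrt{x^2+4t}) = t/x + O(1/x^3)$, so both ratios take the common form $\bar{F}_n(x+t/x+\eta)/\bar{F}_n(x)$ with $\eta=O(1/x^3)$ (and $\eta=0$ in Part 2).

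I would then split the tail-ratio as
\begin{align*}
\frac{\bar{F}_n(x+\delta)}{\bar{F}_n(x)} = \frac{r_n(x+\delta)}{r_n(x)} \cdot \frac{f_n(x+\delta)}{f_n(x)}, \qquad r_n(u) := \bar{F}_n(u)/f_n(u).
\end{align*}
Lemma~\ref{lemma:mills} pins $r_n(u)$ to $(1-u^2/n)/u$ up to a multiplicative $1+O(1/u^2)$ (both upper and lower bounds there agree at this leading order), so for $x\ge\sqrt{\log p}$ and $\delta = O(1/\sqrt{\log p})$ a direct calculation gives $r_n(x+\delta)/r_n(x) = 1 + O(1/\log p)$, which is absorbed in the target $O(1/\sqrt{\log p})$. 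The main content is then the explicit density ratio
\begin{align*}
\frac{f_n(x+\delta)}{f_n(x)} = \left[1 - \frac{2x\delta+\delta^2}{n-x^2}\right]^{(n-4)/2}.
\end{align*}
With $\delta=t/x$ the bracket numerator equals $2t+t^2/x^2$; taking logs, expanding $\log(1-u) = -u + O(u^2)$, and writing $(n-4)/(n-x^2) = 1 + (x^2-4)/(n-x^2)$ yields three error contributions: $O((x^2-4)/n) = O(\log p/n)$ from the prefactor, $O(t^2/x^2) = O(1/\log p)$ from the $t^2/(2x^2)$ piece, and $O(1/n)$ from the quadratic Taylor remainder. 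All are dominated by $O(1/\sqrt{\log p}+\log p/n)$, giving $f_n(x+t/x)/f_n(x) = e^{-t}(1 + O(1/\sqrt{\log p}+\log p/n))$. The residual shift $\eta = O(1/x^3)$ in Part 1 multiplies the tail-ratio by $1 + O(\eta\cdot|f_n'/f_n|(x)) = 1 + O(1/x^2) = O(1/\log p)$, since $|f_n'/f_n|(x) = (n-4)x/(n-x^2) = O(x)$ for moderate $x$.

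\textbf{Main obstacle.} The delicate point is the upper end of the stated range $x\in(\sqrt{\log p},\sqrt{n})$: as $x$ approaches $\sqrt{n}$, the denominator $n-x^2$ shrinks and the Taylor expansion above ceases to be controlled. In the applications of the lemma in this paper, $x$ is of order $\sqrt{\log p}$ (so $x^2/n\to 0$) and the error estimates go through unchanged; handling the full range would require a separate endpoint argument, for instance by restricting to $x^2\le (1-\epsilon)n$ and treating the tail near $\sqrt{n}$ using that both numerator and denominator are driven by the common vanishing factor $(1-x^2/n)^{(n-2)/2}$, so their ratio remains close to $e^{-t}$.
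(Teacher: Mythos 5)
Your proposal is correct and follows essentially the same route as the paper's proof: reduce the event $\{Y(Y-x)\ge t\}$ to a tail crossing at the quadratic root $x+t/x+O(x^{-3})$, sandwich $\bar F_n$ between $L\cdot f_n$ and $U\cdot f_n$ using the Mills-ratio bounds of Lemma~\ref{lemma:mills}, and expand the explicit density ratio $f_n(x+\delta)/f_n(x)$ to extract the factor $e^{-t}$ with errors $O(1/\sqrt{\log p}+\log p/n)$. The endpoint caveat you raise near $x=\sqrt n$ is a fair observation, but it applies equally to the paper's own argument, which likewise performs the expansion as if $x^2/n$ were bounded away from $1$ and relies on the lemma only being invoked where the upper tail contributes negligibly.
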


\begin{proof}
  First, note that $|S_{ij}|(|S_{ij}|-x) > t/n \Leftrightarrow |S_{ij}| > x +
  \frac{x}{2}\left(\sqrt{1-\frac{4t}{x^2}}-1\right)$.  Applying the Lagrange
  form of the Taylor series remainder, this is equivalent to $|S_{ij}| > x +
  \frac{x}{2}\left(\frac{2t}{x^2} -
  \frac{1}{(1+\xi)^{3/2}}\frac{4t^2}{x^4}\right)$ for $\xi \in
  [0,\frac{4t}{x^2}]$.  With some rearrangement, this becomes $|S_{ij}| >
  \frac{t}{x} - \frac{2t^2\gamma}{x^3}$ for a particular $\gamma \in
  [\frac{1}{(1+4t/x^2)^{3/2}},1] \subset [0,1]$.

  Next, from \cite{corrlimit}, we know that $\sqrt{n}|S_{ij}|$ has distribution 
  $f_n(x) =
  \frac{2}{\sqrt{n\pi}}\frac{\Gamma(\frac{n-1}{2})}{\Gamma(\frac{n-2}{2})}\left(1-\frac{x^2}{n}\right)^{(n-4)/2}$
  with support on $(0,\sqrt{n})$, so that Lemma \ref{lemma:mills} applies.  Let
  \begin{align*}
    U(z) = \frac{n}{n-2}\frac{1}{z}\left(1-\frac{z^2}{n}\right),\qquad L(z) =
    \frac{n+1}{n-2}\frac{z}{z^2+1}\left(1-\frac{z^2}{n}\right),
  \end{align*}
  the quantities from Lemma \ref{lemma:mills}, so that
  \begin{align*}
    L(z)f(z) \le \bar{F}(z) = \Pr(\sqrt{n}S_{ij}\ge z) \le U(z)f(z).
  \end{align*}

  Evaluating this expression at $z = x + \frac{t}{x} - \frac{2t^2\gamma}{x^3}$ and $z=x$ and dividing
  yields the bounds
  \begin{align*}
    \frac{L(x + \frac{t}{x} - \frac{2t^2\gamma}{x^3})}{U(x)}\cdot
    \frac{f(x+\frac{t}{x} - \frac{2t^2\gamma}{x^3})}{f_n(x)} &\le \frac{\Pr\left(n^{1/2}|S_{ij}|\ge x +
    \frac{t}{x} - \frac{2t^2\gamma}{x^3}\right)}{\Pr\left(n^{1/2}|S_{ij}|\ge
    x\right)}\\
    &\le \frac{f(x+\frac{t}{x} - \frac{2t^2\gamma}{x^3})}{f_n(x)}\cdot\frac{U(x+\frac{t}{x} - \frac{2t^2\gamma}{x^3})}{L(x)}.
  \end{align*}
  Since $x \ge \sqrt{\log p}$, as $(\log p)/n \to \infty$ these expressions  
  behave like
  \begin{align*}
    \frac{f(x+\frac{t}{x} - \frac{2t^2\gamma}{x^3})}{f_n(x)} &= \left(\frac{1-\frac{1}{n}(x +
    \frac{t}{x} - \frac{2t^2\gamma}{x^3})^2}{1-x^2/n}\right)^{(n-4)/2} = e^{-t}\left(1+O\left(\frac{1}{\log p} + \frac{\log p}{n}\right)\right)\\
    \frac{U(x+\frac{t}{x} - \frac{2t^2\gamma}{x^3})}{L(x)} &=
    \frac{\frac{n}{n-2}\frac{1}{x+\frac{t}{x} -
    \frac{2t^2\gamma}{x^3}}\left(1-\frac{(x+\frac{t}{x} - \frac{2t^2\gamma}{x^3})^2}{n}\right)}{\frac{n+1}{n-2}\frac{x}{(1+x)^2}\left(1-x^2/n\right)}
    = 1 + O\left(\frac{1}{\sqrt{\log p}} + \frac{\log p}{n}\right)\\
    \frac{L(x+\frac{t}{x} - \frac{2t^2\gamma}{x^3})}{U(x)} &= \frac{\frac{n+1}{n-2}\frac{x +
    \frac{t}{x} - \frac{2t^2\gamma}{x^3}}{(x+\frac{t}{x} -
    \frac{2t^2\gamma}{x^3})^2+1}\left(1-\frac{1}{n}(x+\frac{t}{x} - \frac{2t^2\gamma}{x^3})^2\right)}{\frac{n}{n-2}\frac{1}{x}\left(1-\frac{x^2}{n}\right)}
    =  1 + O\left(\frac{1}{\sqrt{\log p}} + \frac{\log p}{n}\right)
  \end{align*}
  and so 
  \begin{align*}
    \frac{\Pr\left(n^{1/2}|S_{ij}|\left(n^{1/2}|S_{ij}|-x\right)>t\right)}{\Pr\left(n^{1/2}|S_{ij}|\ge x\right)} =
    e^{-t}\left(1+O\left(\frac{1}{\sqrt{\log p}} + \frac{\log
    p}{n}\right)\right).
  \end{align*}
  The same argument holds with $\gamma = 0$, yielding 
  \begin{align*}
    \frac{\Pr\left(n^{1/2}|S_{ij}|\ge x +
    \frac{t}{x}\right)}{\Pr\left(n^{1/2}|S_{ij}|\ge x\right)} =
    e^{-t}\left(1+O\left(\frac{1}{\sqrt{\log p}} + \frac{\log
    p}{n}\right)\right).
  \end{align*}
\end{proof}

\begin{lemma}
  \label{lemma:Mexplowerbound}
  Let $S$ be a $p\times p$ correlation matrix, with underlying data
  $Z_1,\dots,Z_p \in \R^n$, where the $Z_j$ are independent and identically
  spherically distributed around the origin.  Then
  \begin{align*}
    \Pr(\max_{i<j} \sqrt{n}|S_{ij}| < \sqrt{\log p}) \le
    \exp\left(-\frac{p^{3/5}}{4\sqrt{\log p}}\right).
  \end{align*}
\end{lemma}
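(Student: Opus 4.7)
The plan is to partition the variables into disjoint blocks, exploit the fact that pairwise correlations are pairwise (but not jointly) independent to apply Chebyshev's inequality within each block, and then multiply over the blocks using that disjoint blocks are fully independent. Three ingredients drive the argument: pairwise independence of correlations (via spherical symmetry), full independence across disjoint blocks, and the Mills-ratio lower bound from Lemma~\ref{lemma:mills}. Note that a naive matching argument (using $\lfloor p/2\rfloor$ independent correlations) would only yield an exponent of order $p^{1/2}/\sqrt{\log p}$, which is too weak, so we must use more than the $O(p)$ ``degrees of independence'' available in a matching.

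The key preliminary is establishing pairwise independence of any two distinct correlations $S_{ij}$ and $S_{kl}$. For disjoint pairs this is immediate, since the pair is a function of two disjoint sets of $Z$'s. For pairs sharing exactly one index, say $i=k$, write $u_i=Z_i/\|Z_i\|$ and condition on $u_i$: the variables $S_{ij}$ and $S_{il}$ are then independent functions of independent $Z_j,Z_l$, and by spherical symmetry each has marginal distribution $f_n$ regardless of $u_i$. Factoring the joint law through this conditioning (and using that the conditional law equals the unconditional one) gives unconditional independence.

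With pairwise independence in hand, I would partition $\{1,\dots,p\}$ into $K=\lfloor p/g\rfloor$ disjoint blocks of size $g$ (to be chosen) and define $N_B = |\{(i,j):i<j\in B,\ \sqrt{n}|S_{ij}|\ge\sqrt{\log p}\}|$ with mean $\mu_B=\binom{g}{2}\bar{F}_n(\sqrt{\log p})$. Pairwise independence kills all within-block covariances, so $\Var(N_B)\le\mu_B$ and Chebyshev yields $\Pr(N_B=0)\le 1/\mu_B$. Since disjoint blocks are functions of disjoint sets of $Z_i$'s, the events $\{N_B=0\}$ are jointly independent, so
\[
  \Pr\left(\max_{i<j}\sqrt{n}|S_{ij}|<\sqrt{\log p}\right)\le\prod_{B}\Pr(N_B=0)\le\mu_B^{-K}.
\]

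Finally, applying Lemma~\ref{lemma:mills} at $x=\sqrt{\log p}$ together with $f_n(\sqrt{\log p})=\Theta(p^{-1/2})$ (valid under the ambient assumption $\log p/n\to 0$) gives $\bar{F}_n(\sqrt{\log p})=\Omega(1/\sqrt{p\log p})$. Choosing $g=p^{2/5}$ yields $\mu_B=\Omega(p^{3/10}/\sqrt{\log p})$ and $K=\Theta(p^{3/5})$, giving $\mu_B^{-K}\le\exp\bigl(-\tfrac{3}{10}p^{3/5}\log p\,(1+o(1))\bigr)$, which is much stronger than the claimed bound $\exp(-p^{3/5}/(4\sqrt{\log p}))$ for $p$ sufficiently large. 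The main obstacle is the pairwise independence argument, which rests on spherical symmetry; once that is in hand, the rest is routine optimization of the block size. A secondary issue is producing a bound that is uniform in $p$ rather than merely asymptotic, which may require either tighter constants in Lemma~\ref{lemma:mills} or a separate argument handling the moderate-$p$ regime.
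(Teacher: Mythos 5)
Your proposal is correct, and it takes a genuinely different route from the paper. The paper uses exactly the ``naive matching'' you dismiss: it sets $\tilde S_j = S_{2j-1,2j}$, bounds the probability by $\bigl(1-\bar F_n(\sqrt{\log p})\bigr)^{\lfloor p/2\rfloor}$, and then claims $\bar F_n(\sqrt{\log p})\ge \tfrac{1}{2\sqrt{\log p}}\,p^{-2/5}$ to reach $\exp(-p^{3/5}/(4\sqrt{\log p}))$. Your instinct that this cannot deliver $p^{3/5}$ is well founded: by the upper half of Lemma~\ref{lemma:mills}, $\bar F_n(\sqrt{\log p})\le C\,p^{-1/2}(\log p)^{-1/2}$ once $\log p/n$ is small, so the claimed $p^{-2/5}$ lower bound is false for large $p$ (the exponent $13/36$ in the paper's display looks like an arithmetic slip --- its own preceding bound gives $p^{-1/2-1/9}=p^{-11/18}$). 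The matching argument therefore yields only an exponent of order $p^{7/18}/\sqrt{\log p}$ (or $p^{1/2}/\sqrt{\log p}$ when $\log^2 p/n\to 0$). That is still enough for every downstream use --- in Theorems~\ref{thm:globalfirststep} and~\ref{thm:globallatersteps} any bound of the form $p^2 e^{-p^{c}}\to 0$ suffices --- but it does not prove the lemma as stated.

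Your block construction does prove it, and more. Pairwise independence of distinct correlations (which the paper itself invokes without proof in Lemma~\ref{lemma:chebyshev}, and which your conditioning-on-$u_i$ argument establishes correctly) kills the within-block covariances, so $\mathrm{Var}(N_B)\le\mu_B$ and $\Pr(N_B=0)\le\mu_B^{-1}$; full independence across disjoint blocks then multiplies these bounds, and with $g=p^{2/5}$ you get $\exp\bigl(-\tfrac{3}{10}p^{3/5}\log p\,(1+o(1))\bigr)$, strictly stronger than the target. One small caution: under $\log p/n\to 0$ alone you only get $f_n(\sqrt{\log p})\ge c\,p^{-1/2-\eps}$ eventually (e.g.\ $\eps=1/9$ once $\log p/n<1/4$, as in the paper's own estimate), not $\Theta(p^{-1/2})$; this perturbs your constant $3/10$ but not the conclusion. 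The uniformity-in-$p$ issue you flag is shared by the paper, whose proof likewise assumes $p\ge 8$ and $\log p/n<1/4$.
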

\begin{proof}
  Let $\tilde{S}_j = S_{2j-1,2j}$, $j=1,\dots,\floor{\frac{p}{2}}$.  Note that the $\tilde{S}_j$ are mutually
  independent, and that $\sqrt{n}\tilde{S}_j \sim f_n(x)$.  Then
  \begin{align*}
    \Pr(\max_{i<j}\sqrt{n}|S_{ij}| < \sqrt{\log p}) &\le
    \left(\Pr(|\tilde{S}_j| \le \sqrt{\log
    p})\right)^{\floor{\frac{p}{2}}}
    = \left(1-\bar{F}_n(\sqrt{\log p})\right)^{\floor{\frac{p}{2}}}
  \end{align*}
  Using Lemma \ref{lemma:mills}, 
  \begin{align*}
    \bar{F}_n(\sqrt{\log p}) &\ge \frac{1}{\sqrt{\log
    p}}\left(1-\frac{\log p}{n}\right)^{n/2-1}\left(1+O\left(\frac{1}{\log p}\right)\right)\\
    &\ge \frac{1}{\sqrt{\log p}}\left(1+O\left(\frac{1}{\log p}\right)\right)\exp\left(-\frac{\log p}{2}-\frac{\log^2
    p}{4n(1-\xi)^2}\right)
  \end{align*}
  Here $\xi \in [0,\frac{\log p}{n}]$ from applying the Taylor series remainder
  theorem to the expansion of $\log(1-x)$ around 0.  Suppose that $\log p/n < 1/4$,
  which must happen eventually since $\log p/n \to 0$.  Then $\frac{\log^2
  p}{4n(1-\xi)^2} \le \frac{1}{9}\log p$ and therefore $\exp\left(-\frac{\log^2
  p}{4n(1-\xi)^2}\right) \ge p^{-1/9}$, so 
  \begin{align*}
    \bar{F}_n(\sqrt{\log p}) &\ge \frac{1}{\sqrt{\log
    p}}p^{-13/36}\left(1+O\left(\frac{1}{\log
    p}\right)\right) \ge \frac{1}{2\sqrt{\log p}}p^{-2/5}
  \end{align*}
  where the last inequality assumes $p \ge 8$ and uses $2/5> 13/36$ for a
  simpler expression.  Plugging this back into the initial expression and
  assuming $p$ even for notational convenience, 
  \begin{align*}
    \Pr\left(\max_{i<j}\sqrt{n}S_{ij} < \sqrt{\log p}\right) &\le \left(1-\frac{1}{2\sqrt{\log
    p}}p^{-2/5}\right)^{p/2} \le \exp\left(-\frac{p^{3/5}}{4\sqrt{\log p}}\right)
  \end{align*}
\end{proof}

\begin{lemma}
  \label{lemma:chebyshev}
  Let $S$ be a $p\times p$ correlation matrix, with underlying data
  $Z_1,\dots,Z_p \in \R^n$, where the $Z_j$ are independent and identically
  spherically distributed around the origin.  
  
  Let $\tilde{Z} = \sum_{i<j}
  \left(\bar{F}_n(x) - I_{ij}^{(x)}\right)$, where $I_{ij}^{(x)} =
  \Ind_{\{\sqrt{n}|S_{ij}|>x\}}$.  The third moment of $|\tilde{Z}|$ can be computed for
  use in a third-moment Chebyshev bound, which results in the bound
  \begin{align*}
    \Pr\left(\sum_{i<j} I_{ij}^{(x)} < k\right) \le
    \frac{1}{\binom{p}{2}^2\bar{F}_n^2(x)}\cdot 
    \frac{1+4(p-3)\bar{F}_n(x)}{\left(1-\frac{k}{\binom{p}{2}\bar{F}_n(x)}\right)^3},
  \end{align*}
  which simplifies when $x < \sqrt{3.5\log p}$ to 
  \begin{align*}
    \Pr\left(\sum_{i<j} I_{ij}^{(x)} < k\right) \le
    \frac{1}{\binom{p}{2}^2\bar{F}_n^2(x)}\cdot
    \left(1 + O\left(\frac{\sqrt{\log p}}{p^{1/4}}\right)\right)
  \end{align*}
\end{lemma}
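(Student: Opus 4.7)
The strategy is a third-moment Markov bound. Writing $\mu=\binom{p}{2}\bar{F}_n(x)$, observe that $\{\sum_{i<j}I_{ij}^{(x)}<k\}=\{\tilde{Z}>\mu-k\}$, so Markov applied to $|\tilde{Z}|^3$ gives
\begin{align*}
\Pr\!\Bigl(\sum_{i<j}I_{ij}^{(x)}<k\Bigr)\le\frac{\E|\tilde{Z}|^3}{(\mu-k)^3}=\frac{\E|\tilde{Z}|^3}{\binom{p}{2}^3\bar{F}_n^3(x)\bigl(1-k/\mu\bigr)^3}.
\end{align*}
Matching the claimed form reduces everything to the moment bound $\E|\tilde{Z}|^3\le \binom{p}{2}\bar{F}_n(x)\bigl(1+4(p-3)\bar{F}_n(x)\bigr)$.

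The enabling structural observation is that the $I_{ij}^{(x)}$ are pairwise \emph{uncorrelated}, even when the index pairs share a vertex. If $(i,j)\ne(i',j')$ share the index $i=i'$, conditioning on $Z_i$ leaves $Z_j/\|Z_j\|$ and $Z_{j'}/\|Z_{j'}\|$ independent and uniform on the sphere (by spherical symmetry of the $Z_\ell$), so $S_{ij}$ and $S_{ij'}$ are conditionally independent with marginal density $f_n$; disjoint pairs are outright independent. In both cases $\E I_{ij}^{(x)}I_{i'j'}^{(x)}=\bar{F}_n^2(x)$, and hence $\mathrm{Var}(\tilde{Z})=\binom{p}{2}\bar{F}_n(x)(1-\bar{F}_n(x))$. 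From here I would control $\E|\tilde{Z}|^3$ either via Cauchy--Schwarz, $\E|\tilde{Z}|^3\le(\E\tilde{Z}^2\,\E\tilde{Z}^4)^{1/2}$, or by direct expansion in the centered indicators $Y_{ij}=\bar{F}_n(x)-I_{ij}^{(x)}$. The leading $\binom{p}{2}\bar{F}_n(x)$ term comes from the diagonal (single-edge) contributions, for which $|\E Y^k|\le\bar{F}_n(x)$ at every order $k\ge 2$, while the additional $4(p-3)\bar{F}_n(x)$ factor absorbs cross-terms from edge configurations sharing a vertex: each edge has $2(p-2)$ vertex-sharing neighbors in the edge-dependency graph on $K_p$, and each such \emph{star} or \emph{triangle} arrangement produces mixed moments of order $\bar{F}_n^2(x)$.

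The main obstacle will be the combinatorial bookkeeping of this moment expansion. Mutually disjoint edge sets factor cleanly through independence, and vertex-sharing pairs decouple via the conditioning trick above; but three-edge triangles $\{i,j\},\{j,k\},\{i,k\}$ admit no such decoupling and must be bounded crudely by $\E[I_{e_1}I_{e_2}I_{e_3}]\le\E[I_{e_1}I_{e_2}]=\bar{F}_n^2(x)$. Verifying that these individual estimates combine into the stated $(1+4(p-3)\bar{F}_n(x))$ form is the chief technical task. Finally, for the simplified bound when $x<\sqrt{3.5\log p}$, the Mills-ratio estimate of Lemma~\ref{lemma:mills} makes $\bar{F}_n(x)$ decay fast enough that $4(p-3)\bar{F}_n(x)$ is a lower-order correction, and the denominator factor $(1-k/\mu)^{-3}=1+O(\sqrt{\log p}/p^{1/4})$ supplies the dominant stated error.
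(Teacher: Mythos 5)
Your proposal takes essentially the same route as the paper's proof: a third-moment Markov bound on $\tilde{Z}$, with $\E|\tilde{Z}|^3$ controlled via pairwise independence of the indicators so that only the $\binom{p}{2}$ diagonal terms (each at most $\bar{F}_n(x)$) and the $p(p-1)(p-2)$ triangle terms (each of order $\bar{F}_n^2(x)$, bounded by conditioning exactly as you suggest) contribute, yielding $\E|\tilde{Z}|^3\le\binom{p}{2}\bar{F}_n(x)\bigl(1+4(p-3)\bar{F}_n(x)\bigr)$. The only slip is minor: the star configurations you list actually vanish outright (distinct edges meeting at a single common vertex are conditionally, hence mutually, independent by the same spherical-symmetry argument), so the $\bar{F}_n^2$ cross-terms come from triangles alone, which is what the paper uses.
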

\begin{proof}
  The sums involved in $\E \tilde{Z}^3$ are simplified by the pairwise independence of
  the $I_{ij}^{(x)}$ (due to the pairwise independence of the correlations in
  $S$).  The only terms that are nonzero are those where all the terms
  correspond to the same indices $ij$ (of which there are $\binom{p}{2}$), and those where the terms correspond to
  a cycle involving three indices $ij, jk, ki$ (of which there are
  $p(p-1)(p-2)$).  The expectation of the first
  is easy to compute.
  \begin{align*}
    \E\left|\bar{F}_n(x) - I_{ij}^{(x)}\right|^3 &=
    \bar{F}_n(x)\left|\bar{F}_n(x)-1\right|^3 + (1-\bar{F}_n(x))\bar{F}_n^3(x)\\
    &= \bar{F}_n(x)(1-\bar{F}_n(x))\left(\bar{F}_n^2(x)+(1-\bar{F}_n(x))^2\right)\\
    &\le \bar{F}_n(x)
  \end{align*}
  The second is harder to compute, but can be bounded.  Note first that
  \begin{align*}
    \Pr(I_{jk}+I_{ki} > 0| I_{ij}) &\le 2\Pr(I_{jk}=1|I_{ij}) = 2\bar{F}_n(x)
  \end{align*}
  by applying a union bound and pairwise independence.  Furthermore, if at
  least one of $I_{jk}$ and $I_{ki}$ are nonzero, then 
  \begin{align*}
    \E\left(|\bar{F}_n(x)-I_{jk}^{(x)}|\cdot|\bar{F}_n(x)-I_{ki}^{(x)}|\middle|I_{jk}+I_{ki}>0\right)
    &\le 1-\bar{F}_n(x)
  \end{align*}
  Knowing this, we can condition on $I_{ij}$ and expand the original
  expectation to obtain a bound.
  \begin{align*}
    &\E\left|\left(\bar{F}_n(x)-I_{ij}^{(x)}\right)\left(\bar{F}_n(x)-I_{jk}^{(x)}\right)\left(\bar{F}_n(x)-I_{ki}^{(x)}\right)\right|\\
    &\quad = 2\bar{F}_n(x)(1-\bar{F}_n(x))\left( (1-2\bar{F}_n(x))\bar{F}_n^2(x) +
    2\bar{F}_n(x)(1-\bar{F}_n(x))\right)\\
    &\quad \le 2\bar{F}_n^2(x)
  \end{align*}
  Using all this, we can bound $\E |\tilde{Z}|^3$  
  \begin{align*}
    \E |\tilde{Z}|^3 &\le \binom{p}{2}\bar{F}_n(x) +
    4\binom{p}{2}(p-3)\bar{F}_n^2(x).
  \end{align*}
  Then applying Chebyshev's inequality to $\Pr(\sum_{i<j} I_{ij}^{(x)} < k) =
  \Pr\left(\tilde{Z} > \binom{p}{2}\bar{F}_n(x) - k\right)$, we obtain
  \begin{align*}
    \Pr\left(\sum_{i<j} I_{ij}^{(x)} < k\right) \le \frac{\binom{p}{2}\bar{F}_n(x)
    + 4\binom{p}{2}(p-3)\bar{F}_n^2(x)}{\left(\binom{p}{2}\bar{F}_n(x) - k\right)^3}
    = \frac{1}{\binom{p}{2}^2\bar{F}_n^2(x)}\cdot 
    \frac{1+4(p-3)\bar{F}_n(x)}{\left(1-\frac{k}{\binom{p}{2}\bar{F}_n(x)}\right)^3}
  \end{align*}
\end{proof}

\begin{lemma}
  \label{lemma:gaplimit}
  Consider mutually independent random variables $\tilde{S}_1,\dots,\tilde{S}_k$ with
  distribution $f_n$ and $M_{n,p}$ which is distributed as
  $\max_{i<j}\sqrt{n}|S_{ij}|$ for $S_{ij}$ as in Theorem
  \ref{thm:globalfirststep}.  Then
  \begin{align*}
    &\Pr\left(\bigcap_i \left\{\tilde{S}_i(\tilde{S}_i-M_{n,p})>t\right\}\right)\\
    &\qquad= e^{-kt}\Pr\left(\bigcap_i\left\{\tilde{S}_i>M_{n,p}\right\}\right)\left(1+O\left(\frac{1}{\sqrt{\log
    p}} + \frac{\log p}{n}\right)\right)+ O\left(e^{-\frac{p^{3/5}}{4\sqrt{\log p}}}\right)
  \end{align*}
\end{lemma}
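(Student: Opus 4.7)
The plan is to condition on $M_{n,p}$ and exploit the independence of the $\tilde{S}_i$ from each other and from $M_{n,p}$, so that the intersection probabilities factorize. Writing $g_{n,p}$ for the density of $M_{n,p}$, I would express both sides as integrals
\begin{align*}
  \Pr\left(\bigcap_i \{\tilde{S}_i(\tilde{S}_i-M_{n,p})>t\}\right) &= \int_0^{\sqrt{n}} \prod_{i=1}^k \Pr(\tilde{S}_i(\tilde{S}_i-x)>t) \, g_{n,p}(x)\,dx,\\
  \Pr\left(\bigcap_i \{\tilde{S}_i>M_{n,p}\}\right) &= \int_0^{\sqrt{n}} \prod_{i=1}^k \bar F_n(x) \, g_{n,p}(x)\,dx,
\end{align*}
and then compare the two integrands pointwise in $x$.

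The natural splitting point is $x=\sqrt{\log p}$. On $\{M_{n,p}<\sqrt{\log p}\}$ both integrands are bounded by $1$, so Lemma \ref{lemma:Mexplowerbound} shows the contribution from this region is $O(e^{-p^{3/5}/(4\sqrt{\log p})})$ on each side, which is precisely the additive error stated in the lemma. For $x\in[\sqrt{\log p},\sqrt{n})$, Lemma \ref{lemma:exp_limit} gives, for each $i$,
\begin{align*}
  \Pr(\tilde{S}_i(\tilde{S}_i-x)>t) = e^{-t}\left(1+O\!\left(\tfrac{1}{\sqrt{\log p}}+\tfrac{\log p}{n}\right)\right)\bar F_n(x),
\end{align*}
uniformly in $x$ on this range. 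Since $k$ is fixed, taking the product over $i$ multiplies the leading term to $e^{-kt}$ and only inflates the relative error by a factor of $k$, giving $\prod_i \Pr(\tilde{S}_i(\tilde{S}_i-x)>t) = e^{-kt}(1+O(\cdot))\prod_i \bar F_n(x)$.

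I would then pull the $x$-uniform factor $e^{-kt}(1+O(1/\sqrt{\log p}+\log p/n))$ out of the integral over $[\sqrt{\log p},\sqrt{n})$, obtaining $e^{-kt}(1+O(\cdot))$ times the truncated version of the integral for $\Pr(\bigcap_i\{\tilde{S}_i>M_{n,p}\})$. Re-adding the $[0,\sqrt{\log p})$ piece of the latter integral only changes it by at most $\Pr(M_{n,p}<\sqrt{\log p})=O(e^{-p^{3/5}/(4\sqrt{\log p})})$, which is absorbed into the additive error. Combining the two regions yields the claim.

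The main obstacle is ensuring that the relative error bound from Lemma \ref{lemma:exp_limit} is genuinely uniform in $x$ across the full range $[\sqrt{\log p},\sqrt{n})$ and does not blow up near the endpoint $\sqrt{n}$ where $\bar F_n$ vanishes; this is handled by Lemma \ref{lemma:exp_limit} as stated. A minor bookkeeping point is that multiplying $k$ near-unit factors keeps the relative error of the same order only because $k$ is fixed; if $k$ were allowed to grow with $n,p$ this step would fail and a different treatment would be required.
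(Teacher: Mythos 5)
Your proposal is correct and follows essentially the same route as the paper's proof: conditioning on $M_{n,p}$, factorizing by independence, splitting the integral at $\sqrt{\log p}$, bounding the lower region via Lemma \ref{lemma:Mexplowerbound} and applying Lemma \ref{lemma:exp_limit} uniformly on the upper region. No substantive differences.
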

\begin{proof}
  Let $g_{n,p}(x)$ be the density of $M_{n,p}$.  We can expand the probability above by
  conditioning on $M_{n,p}$ and taking advantage of the mutual independence of the
  variables.  We obtain
  \begin{align*}
    \Pr\left(\bigcap_i \left\{\tilde{S}_i(\tilde{S}_i-M_{n,p})>t\right\}\right) &=
    \int_0^{\sqrt{n}} \Pr\left(\bigcap_i
    \left\{\tilde{S}_i(\tilde{S}_i-x)>t\right\}\middle|M_{n,p}=x\right)g_{n,p}(x)dx\\
    &= \int_0^{\sqrt{n}} \Pr\left(\tilde{S}_i(\tilde{S}_i-x)>t\right)^k g_{n,p}(x)dx\\
  \end{align*}
  Splitting the integral at $\sqrt{\log p}$, the first piece can be bounded
  by Lemma \ref{lemma:Mexplowerbound}, and the integrand of the second can be simplified by Lemma
  \ref{lemma:exp_limit}, yielding 
  \begin{align*}
    &\Pr\left(\bigcap_i \left\{\tilde{S}_i(\tilde{S}_i-M_{n,p})>t\right\}\right) \\
    &\qquad = O\left(e^{-\frac{p^{3/5}}{4\sqrt{\log p}}}\right) +
    \int_{\sqrt{\log p}}^{\sqrt{n}}
    e^{-kt}\Pr\left(\tilde{S}_i>x\right)^k\left(1+O\left(\frac{1}{\sqrt{\log p}} + \frac{\log p}{n}\right)\right)
    g_{n,p}(x)dx\\
    &\qquad=  e^{-kt}\Pr\left(\bigcap_i\left\{\tilde{S}_i>M_{n,p}\right\}\right)\left(1+O\left(\frac{1}{\sqrt{\log
    p}} + \frac{\log p}{n}\right)\right)+ O\left(\exp^{-\frac{p^{3/5}}{4\sqrt{\log p}}}\right)
  \end{align*}

\end{proof}

\subsection{Locations of large correlations}

The results in this section address the locations of large
elements within the correlation matrices we consider.  The first shows that
with high probability, the largest correlations will have no overlapping
variables.  The
second lemma shows that, with high probability, the largest correlations will not involve the finite
set of variables, $\Ac$, which have signal.

\begin{lemma}
  \label{lemma:maximadisconnect}
  Let $S$ be a $p\times p$ correlation matrix, with underlying data
  $Z_1,\dots,Z_p \in \R^n$, where the $Z_j$ are independent and identically
  spherically distributed around the origin.

  Let $E$ be the event that the first $k$ largest off-diagonal elements of $S$ share no
  indices.  Then as $n,p\to\infty$ with $k$ fixed and $\frac{\log p}{n} \to 0$, $\Pr(E)\to
  1$.
\end{lemma}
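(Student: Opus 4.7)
I would introduce an intermediate threshold $x^\star = \sqrt{\gamma \log p}$ with $\gamma \in (3, 3.5)$ fixed (say $\gamma = 3.4$) and show that $E$ contains the intersection $A \cap B$ of the following two events: $(A)$ the $k$th largest value of $\sqrt{n}|S_{ij}|$ exceeds $x^\star$; $(B)$ no two distinct off-diagonal entries of $\sqrt{n}|S|$ sharing a common index both exceed $x^\star$. On $A \cap B$, the top $k$ entries of $\sqrt{n}|S|$ all lie above $x^\star$ and, by $(B)$, no two of them can share an index, so $E$ holds. It therefore suffices to show $\Pr(A^c) \to 0$ and $\Pr(B^c) \to 0$.

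For $(A)$, I would apply Lemma \ref{lemma:chebyshev} with $x = x^\star$. Since $\gamma < 3.5$, the simplified bound yields $\Pr(A^c) = O\bigl(\binom{p}{2}^{-2}\bar{F}_n^{-2}(x^\star)\bigr)$. Combining the Mills lower bound of Lemma \ref{lemma:mills} with the elementary estimate $(1 - x^{\star 2}/n)^{(n-4)/2} \gtrsim e^{-x^{\star 2}/2}$, valid because $(\log p)/n \to 0$, gives $\bar{F}_n(x^\star) \gtrsim p^{-\gamma/2}/\sqrt{\log p}$, so $\binom{p}{2}^2 \bar{F}_n^2(x^\star) \gtrsim p^{4-\gamma}/\log p \to \infty$, as required.

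For $(B)$, the key observation is that for distinct indices $i,j,k$ and independent spherical vectors $Z_i, Z_j, Z_k$, the two correlations $S_{ij}$ and $S_{ik}$ are marginally independent. This follows from rotational symmetry: conditional on $\hat Z_i = Z_i/\|Z_i\|$, each of $S_{ij}$ and $S_{ik}$ retains its unconditional marginal law (since $\hat Z_j$ and $\hat Z_k$ remain uniform on the sphere), and they are conditionally independent because they depend on disjoint ingredients $Z_j$ and $Z_k$. Consequently $\Pr(\sqrt{n}|S_{ij}| > x^\star,\ \sqrt{n}|S_{ik}| > x^\star) = \bar{F}_n^2(x^\star)$, and a union bound over the $3\binom{p}{3}$ unordered pairs of off-diagonal entries sharing exactly one index yields $\Pr(B^c) \le 3\binom{p}{3}\bar{F}_n^2(x^\star)$. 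Applying the Mills upper bound of Lemma \ref{lemma:mills} shows this is $O(p^{3-\gamma}/\log p) \to 0$ for $\gamma > 3$.

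The only delicate step is the pairwise-independence claim, which rests on rotational symmetry of the spherical distribution; the remainder is routine bookkeeping with the Mills ratios of Lemma \ref{lemma:mills} and the Chebyshev bound of Lemma \ref{lemma:chebyshev} at the chosen threshold $x^\star$. Since $k$ is fixed, all constants depending on $k$ are absorbed into the $O(\cdot)$ notation, and a single union bound extends the argument from shared-index pairs among the top $k$ to the statement that the top $k$ indices are disjoint.
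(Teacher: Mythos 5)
Your proposal is correct and follows essentially the same route as the paper's proof: both split $E^c$ into the event that the $k$th largest entry falls below a threshold of the form $\sqrt{c\log p}$ with $3<c<4$ (controlled by the third-moment Chebyshev bound of Lemma \ref{lemma:chebyshev} plus the Mills lower bound) and the event that two entries sharing an index both exceed that threshold (controlled by pairwise independence and a union bound over $O(p^3)$ triples). Your treatment of the second event is a touch more streamlined than the paper's, which conditions on $S_{ij}$ and integrates before invoking the same pairwise independence, but the substance is identical.
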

\begin{proof}
  Note that we can bound this probability by
  \begin{align*}
    \Pr(E^c) &\le \Pr\left(\text{The $k^{th}$ largest $|S_{ij}|$ } <
    \sqrt{\frac{3.5\log p}{n}}\right)\\
    &\qquad + \Pr\left(\exists i,j,k: |S_{ik}| > \sqrt{\frac{3.5\log p}{n}}
    \text{ and } |S_{jk}|> \sqrt{\frac{3.5\log p}{n}}\right).
  \end{align*}
  The first of these can be bounded by Lemma \ref{lemma:chebyshev}, giving 
  \begin{align*}
    \Pr\left(\text{The $k^{th}$ largest $|S_{ij}|$ } <
    \sqrt{\frac{3.5\log p}{n}}\right) &\le
    \frac{1}{\binom{p}{2}^2\bar{F}_n^2(\sqrt{3.5 \log p})}\cdot
    \left(1 + O\left(\frac{\sqrt{\log p}}{p^{1/4}}\right)\right).
  \end{align*}
  By Lemma \ref{lemma:mills}, 
  \begin{align*}
    \bar{F}_n(\sqrt{3.5\log p}) &\ge
    \frac{1}{\sqrt{3.5\log p}}\left(1-\frac{3.5\log
    p}{n}\right)^{n/2-1}\left(1+O\left(\frac{1}{n}+\frac{1}{\log
    p}\right)\right)\\
    &= \frac{1}{\sqrt{3.5\log p}}e^{-1.75\log
    p\left(1+O\left(\frac{1}{n}+\frac{\log p}{n}\right)\right)}\left(1+O\left(\frac{1}{n}+\frac{1}{\log
    p}\right)\right)\\
    &\ge \frac{1}{p^{3/2}\sqrt{\log p}}
  \end{align*}
  where we consider in the last step $n,p$ large enough that the
  $O\left(\frac{1}{n}+\frac{1}{\log p}\right) < 1$ and $O\left(\frac{1}{n} +
  \frac{\log p}{n}\right) < \frac{1}{7}$.
  Then 
  \begin{align*}
    \Pr\left(\text{The $k^{th}$ largest $|S_{ij}|$ } <
    \sqrt{\frac{3.5\log p}{n}}\right) &\le \frac{p^3\log p}{\binom{p}{2}^2}\cdot
    \left(1 + O\left(\frac{\sqrt{\log p}}{p^{1/4}}\right)\right) =
    O\left(\frac{\log p}{p}\right).
  \end{align*}

  For the other term, first note that conditional on $S_{ij}$, the pairs
  $\{S_{ik},S_{jk}\}$ are i.i.d. over all $k\notin \{i,j\}$.  Using this, and
  defining $C = \left\{s: |s| > \sqrt{\frac{3.5\log p}{n}}\right\}$
  \begin{align*}
    &\Pr\left(|S_{ij}|>\sqrt{\frac{3.5\log p}{n}}\text{ and }\exists k: \max\{|S_{ik}|,|S_{jk}|\}>\sqrt{\frac{3.5\log
    p}{n}}\right)\\
    &\qquad= \int_C \Pr\left(\exists k: \max\{|S_{ik}|,|S_{jk}|\}>\sqrt{\frac{3.5\log
    p}{n}}\middle|S_{ij} = s\right)f_n(s)ds\\
    &\qquad= \int_C \left(1-\left(1-\Pr\left(\max\{|S_{ik}|,|S_{jk}|\}>\sqrt{\frac{3.5\log
    p}{n}}\middle|S_{ij} = s\right)\right)^{p-2}\right)f_n(s)ds.
  \end{align*}
  We can then apply a union bound to obtain $\Pr\left(\max\{|S_{ik}|,|S_{jk}|\}>\sqrt{\frac{3.5\log p}{n}}\middle|S_{ij}
  = s\right) \le 2\Pr\left(|S_{ik}|>\sqrt{\frac{3.5\log p}{n}}\middle|
  S_{ij}=s\right)$.  Since the elements of $S$ are pairwise independent, this
  simplifies to $2\Pr\left(|S_{ik}|>\sqrt{\frac{3.5\log p}{n}}\right)$.  Notating
  $\bar{F}_n(\sqrt{3.5\log p}) = \Pr\left(|S_{ik}|>\sqrt{\frac{3.5\log
  p}{n}}\right)$ as before, the bound becomes
  \begin{align*}
    &\Pr\left(|S_{ij}|>\sqrt{\frac{3.5\log p}{n}}\text{ and }\exists k: \max\{|S_{ik}|,|S_{jk}|\}>\sqrt{\frac{3.5\log
    p}{n}}\right)\\
    &\qquad\le \int_C \left(1-\left(1-2\bar{F}_n(\sqrt{3.5\log
    p})\right)^{p-2}\right)f_n(s)ds\\
    &= \left(1-\left(1-2\bar{F}_n(\sqrt{3.5\log
    p})\right)^{p-2}\right)\bar{F}_n(\sqrt{3.5\log p})\\
    &\qquad\le 1-\left(1-\frac{2\sqrt{2}p^{-1.75(1-2/n)}}{\sqrt{3.5\pi\log
        p}}\left(1+O\left(\frac{1}{n}\right)\right)\right)^{p-2}\left(\frac{\sqrt{2}p^{-1.75(1-2/n)}}{\sqrt{3.5\pi\log
        p}}\left(1+O\left(\frac{1}{n}\right)\right)\right)\\
        &\qquad= \frac{4}{3.5\pi p^{2.5(1-2/n)}\log
        p}\left(1+O\left(\frac{1}{n}+\frac{1}{p^{3/4}\log p}\right)\right)\\
  \end{align*}
  where Lemma \ref{lemma:mills} was used in the last step to bound
  $\bar{F}_n(\sqrt{3.5\log p})$.

  Using this bound, the second term from the beginning of this lemma can be bounded,
  \begin{align*}
    &\Pr\left(\exists i,j,k: |S_{ik}| \text{ and } |S_{jk}| >
    \sqrt{\frac{3.5\log p}{n}}\right) \\
    &\qquad\le \sum_{i<j}\Pr\left(|S_{ij}|>\sqrt{\frac{3.5\log
    p}{n}}\text{ and }\exists k: \max\{|S_{ik}|,|S_{jk}|\}>\sqrt{\frac{3.5\log
    p}{n}}\right)\\
    &\qquad\le \sum_{i<j} \frac{4}{3.5\pi p^{2.5(1-2/n)}\log
    p}\left(1+O\left(\frac{1}{n}+\frac{1}{p^{3/4}\log p}\right)\right)\\
    &\qquad\le \frac{2}{3.5\pi p^{1/2 - 5/n}\log
    p}\left(1+O\left(\frac{1}{n}+\frac{1}{p^{3/4}\log p}\right)\right)
     \le o\left(\frac{1}{p^{1/2-\eps}\log p}\right)
  \end{align*}
  for any $\eps > 0$.

  Combining these two bounds, we have $\Pr(E) \ge 1 -
  o\left(\frac{1}{p^{1/2-\eps}\log p}\right)$, so $\Pr(E) \to 1$.
\end{proof}

\begin{lemma}
\label{lemma:signalcasedisconnect}
Consider the setting of Theorem \ref{thm:latersteps}.  For convenience, arrange
the $p$ variables so that the first $a \equiv |\Ac|$ of them are the variables in $\Ac$.  
Let $D$ be the event that the (finite) $k$
largest off-diagonal elements of $S$ outside of $\Ac\times\Ac$ appear in the $(p-a)\times (p-a)$ block
involving only variables $Z_{a+1},\dots,Z_p$.

Then as $n,p\to\infty$ with $\frac{\log p}{n}\to 0$, $\Pr(D) \to 1$.
\end{lemma}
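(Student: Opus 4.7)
The plan is to decompose the region outside $\Ac\times\Ac$ into two disjoint parts — the ``cross block'' of entries $S_{ij}$ with $i\in\Ac$, $j\notin\Ac$, and the ``pure noise block'' of entries with both $i,j\notin\Ac$ — then show that the top $k$ entries in the former lie strictly below a threshold $\tau_n$ while at least $k$ entries of the latter lie strictly above $\tau_n$. This forces the top $k$ entries outside $\Ac\times\Ac$ to come from the pure noise block. I would choose $\tau_n=\sqrt{3.5\log p/n}$, matching the threshold used in Lemma \ref{lemma:maximadisconnect} so that the same Mills-ratio bounds and the Chebyshev-type bound of Lemma \ref{lemma:chebyshev} can be applied directly.

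First I would observe that each cross correlation $S_{ij}$ with $i\in\Ac$, $j\notin\Ac$ has marginal density $f_n$: conditional on $Z_i$, the vector $Z_j$ is spherical and independent, so $S_{ij}=Z_i^\top Z_j/(\|Z_i\|\|Z_j\|)$ has density $f_n$ conditionally, hence marginally. There are $a(p-a)$ such entries (with $a=|\Ac|$ fixed), so a union bound and the bound $\bar{F}_n(\sqrt{3.5\log p})=O(p^{-7/4}/\sqrt{\log p})$ derived in the proof of Lemma \ref{lemma:maximadisconnect} give
\begin{align*}
\Pr\!\Big(\max_{i\in\Ac,\,j\notin\Ac}\sqrt{n}|S_{ij}|>\sqrt{3.5\log p}\Big)\le a(p-a)\bar{F}_n(\sqrt{3.5\log p})=O(p^{-3/4}/\sqrt{\log p})\to 0.
\end{align*}

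Second, the $(p-a)\times(p-a)$ block of pure noise correlations is exactly a correlation matrix of $p-a$ i.i.d.\ spherical vectors, so Lemma \ref{lemma:chebyshev} applies verbatim. Using the same lower bound $\bar{F}_n(\sqrt{3.5\log p})\ge 1/(p^{3/2}\sqrt{\log p})$ from the proof of Lemma \ref{lemma:maximadisconnect},
\begin{align*}
\Pr\!\Big(\text{fewer than }k\text{ entries }|S_{ij}|,\ i,j\notin\Ac,\text{ exceed }\sqrt{3.5\log p/n}\Big)\le \frac{1+o(1)}{\binom{p-a}{2}^{2}\bar{F}_n^{2}(\sqrt{3.5\log p})}=O\!\Big(\frac{\log p}{p}\Big)\to 0.
\end{align*}

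On the intersection of the complements of these two bad events, every cross-block entry is strictly smaller than $\tau_n$ while at least $k$ pure-noise entries are strictly larger than $\tau_n$, so the top $k$ off-diagonal entries outside $\Ac\times\Ac$ must lie in the pure noise block, which is precisely the event $D$. Thus $\Pr(D)\to 1$. The main routine obstacle is just confirming that the cross block has the right marginal distribution so that Mills-ratio estimates from Lemma \ref{lemma:mills} apply — the rest is a direct transplant of the calculations used in Lemma \ref{lemma:maximadisconnect}, noting that $a$ is fixed and $p-a\to\infty$ so the $(p-a)\times(p-a)$ block behaves asymptotically just like the full $p\times p$ block did there.
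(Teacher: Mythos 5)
Your proposal is correct and follows essentially the same route as the paper: the same decomposition into cross block and pure-noise block, the same threshold $\sqrt{3.5\log p/n}$, the Chebyshev bound of Lemma \ref{lemma:chebyshev} for the $(p-a)\times(p-a)$ block, and a union bound over the $a(p-a)$ cross entries using their $f_n$ marginal. The paper's proof is just a terser statement of the same argument.
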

\begin{proof}
This argument follows directly from the proof approach of Lemma
\ref{lemma:maximadisconnect}.  The probability that at least k elements of the
$(p-a)\times(p-a)$ block are above $\sqrt{3.5\log p / n}$ is
$1-O\left(\frac{\log (p-a)}{p-a}\right)$ by the argument provided there.
Furthermore, even with structure in the $a\times a$ block of the true
correlation matrix, the entries $S_{ij}$, $i\in \{1,\dots,a\}$, $j\in
\{a+1,\dots,p\}$ still have the same marginal distribution, and furthermore the
rows $S_{i1},\dots,S_{ia}$ are i.i.d. for different values of $i\in
\{a+1,\dots,p\}$.  Therefore the same union bounding approach as in Lemma
\ref{lemma:maximadisconnect} works, with the $2$ in the union bound being replaced by
$a$, yielding 
\begin{align*}
  \Pr\left(\max_{i\ge a+1,j\le a} |S_{ij}| > \sqrt{\frac{3.5\log p}{n}}\right)
  \le \frac{a\sqrt{2}p^{-.75(1-2/n)}}{\sqrt{3.5\pi \log
    p}}\left(1+O\left(\frac{1}{n}+\frac{1}{p^{3/4}\log p}\right)\right).
\end{align*}
Therefore
\begin{align*}
  \Pr(D) &= 1 - O\left(\frac{p^{-.75(1-2/n)}}{\sqrt{\log p}}+\frac{\log
  (p-a)}{p-a}\right) = 1- o\left(\frac{1}{\sqrt{p}}\right)
\end{align*}
so $\Pr(D) \to 1$.

\end{proof}

\subsection{Bounds relating to the approximate partitions, $A_{\cdots}^*$}

The results in this section are all developed to support Lemma
\ref{lemma:astar}, which shows that the sums of the probabilities of the events $A_{ij}^*$ and $A_{\allij}^*$
have the necessary limits for the theorems presented in the paper.  Lemma
\ref{lemma:int1} is a special case of Conjecture \ref{conjecture} for $k=1$,
where a simple proof holds.  Lemma \ref{lemma:intboundfix} uses this result for $k=1$, and depends on Conjecture 1 for $k>1$.

\begin{lemma}
  \label{lemma:int1}
  For $G_{n,p}(x)$ and $f_n(x)$ as defined previously,  
  \begin{align*}
    \int_0^{\sqrt{3.5\log p}} G_{n,p}(x)f_n(x)dx &\sim o\left(\frac{1}{p^2}\right)
  \end{align*}
\end{lemma}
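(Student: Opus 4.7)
The plan is to split the integral at $x=\sqrt{\log p}$ and bound each piece separately, exploiting in both cases the identity $G_{n,p}(x) = \Pr(\sum_{i<j} I_{ij}^{(x)} = 0)$, so that Lemma \ref{lemma:chebyshev} with $k=1$ applies directly to $G_{n,p}(x)$.

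For $x \in [0, \sqrt{\log p}]$, monotonicity of $G_{n,p}$ gives $G_{n,p}(x) \le G_{n,p}(\sqrt{\log p})$, and Lemma \ref{lemma:Mexplowerbound} bounds the latter by $\exp(-p^{3/5}/(4\sqrt{\log p}))$. Since $\int_0^{\sqrt{\log p}} f_n(x)\,dx \le 1$, this piece of the integral is super-polynomially small in $p$, hence $o(1/p^2)$.

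For $x \in [\sqrt{\log p}, \sqrt{3.5\log p}]$, Lemma \ref{lemma:chebyshev} with $k=1$ yields
\[
G_{n,p}(x) \le \frac{1 + 4(p-3)\bar{F}_n(x)}{\binom{p}{2}^2 \bar{F}_n^2(x)}\,(1+o(1)),
\]
where the $(1-1/(\binom{p}{2}\bar{F}_n(x)))^{-3}$ factor collapses to $1+o(1)$ because Lemma \ref{lemma:mills} forces $\binom{p}{2}\bar{F}_n(x) \to \infty$ uniformly on this range. Multiplying by $f_n(x)$ and using the substitution $u=\bar{F}_n(x)$, $du = -f_n(x)\,dx$, the two resulting integrals evaluate in closed form to
\[
\frac{1}{\binom{p}{2}^2}\!\left(\frac{1}{\bar{F}_n(\sqrt{3.5\log p})} - \frac{1}{\bar{F}_n(\sqrt{\log p})}\right) \quad \text{and} \quad \frac{4(p-3)}{\binom{p}{2}^2}\,\log\frac{\bar{F}_n(\sqrt{\log p})}{\bar{F}_n(\sqrt{3.5\log p})}.
\]
Plugging in the Mills bounds of Lemma \ref{lemma:mills} gives $\bar{F}_n(\sqrt{\log p})$ of order $1/\sqrt{p\log p}$ and $\bar{F}_n(\sqrt{3.5\log p})$ of order $1/(p^{7/4}\sqrt{\log p})$, so the first piece is $O(\sqrt{\log p}/p^{9/4})$ and the second is $O(\log p / p^3)$; both are $o(1/p^2)$.

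The main technical care required is verifying that the correction factors from Lemma \ref{lemma:chebyshev} and the Mills ratio bounds of Lemma \ref{lemma:mills} are genuinely $1+o(1)$ uniformly over the entire range $[\sqrt{\log p}, \sqrt{3.5\log p}]$ (rather than just pointwise at the endpoints). Once that uniformity is confirmed, the computation reduces to the two elementary integrals above, and both decay strictly faster than $1/p^2$, the dominant contribution coming from the $\sqrt{\log p}/p^{9/4}$ term.
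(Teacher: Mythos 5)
Your proposal is correct and rests on the same two ingredients as the paper's proof: the third-moment Chebyshev bound of Lemma \ref{lemma:chebyshev} applied to $G_{n,p}(x)=\Pr\bigl(\sum_{i<j}I_{ij}^{(x)}=0\bigr)$, and the Mills-ratio control of $\bar{F}_n$ from Lemma \ref{lemma:mills}. The execution differs in two ways that are worth noting. First, the paper does not split the interval: it applies the ``simplified'' form of Lemma \ref{lemma:chebyshev} over all of $[0,\sqrt{3.5\log p}]$, bounds the integrand pointwise by $O\bigl(\sqrt{\log p}\,p^{-2.25+1.75\log p/n}\bigr)$, and multiplies by the interval length. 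Your split at $\sqrt{\log p}$ (disposing of the lower piece via Lemma \ref{lemma:Mexplowerbound}) together with retaining the $1+4(p-3)\bar{F}_n(x)$ numerator explicitly is actually the more careful route: the paper's absorption of that numerator into a $1+O(\sqrt{\log p}/p^{1/4})$ factor is only justified where $\bar{F}_n(x)\lesssim p^{-5/4}$, which fails on the lower part of $[0,\sqrt{3.5\log p}]$, whereas your version shows the offending term contributes only $O(\log p/p^3)$ after integration. Second, your substitution $u=\bar{F}_n(x)$ evaluates the integrals in closed form and yields the marginally sharper bound $O(\sqrt{\log p}/p^{9/4})$ versus the paper's $O(\log p/p^{9/4})$. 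The one loose end you correctly flag --- uniformity of the $1+o(1)$ factors over the range --- is settled by monotonicity of $\bar{F}_n$ (so $\binom{p}{2}\bar{F}_n(x)\ge\binom{p}{2}\bar{F}_n(\sqrt{3.5\log p})\to\infty$ uniformly) and, as in the paper, by noting that the $p^{1.75\log p/n}$ corrections hidden in the Mills estimates do not disturb the $o(1/p^2)$ conclusion once $\log p/n$ is small enough.
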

\begin{proof}
  Applying the Chebyshev bound from Lemma \ref{lemma:chebyshev} yields
  \begin{align*}
    \Pr(\sqrt{n}M_{ij}< x) \le \frac{1}{\binom{p}{2})^2
    \bar{F}_n^2(x)}\left(1+O\left(\frac{\sqrt{\log p}}{p^{1/4}}\right)\right)
  \end{align*}
  for $x \in [0,\sqrt{3.5\log p}]$.  Recall from Lemma \ref{lemma:mills} that
  \begin{align*}
    \frac{x}{x^2+1}\left(1-\frac{x^2}{n}\right)f_n(x) \le \bar{F}_n(x) =
    \Pr(\sqrt{n}S_{ij}>x)
  \end{align*}
  The original integrand can be bounded above by these inequalities, yielding
  \begin{align*}
    \int_0^{\sqrt{3.5\log p}} G_{n,p}(x)f_n(x)dx 
    &\le \left(1+O\left(\frac{\sqrt{\log p}}{p^{1/4}}\right)\right)\int_0^{\sqrt{3.5\log p}} \frac{4/c_n}{p^2(p-1)^2 \frac{x}{x^2+1}
    \left(1-\frac{x^2}{n}\right)^{n/2}} dx,
  \end{align*}
  This integrand can be shown to be bounded above by $\frac{\sqrt{28\pi}\sqrt{\log
  p}}{p^{2.25-1.75\frac{\log p}{n}}}\left(1+O\left(\frac{1}{n}+\frac{1}{\log
  p}\right)\right)$, and so the integral is bounded by
  $\frac{7\sqrt{2\pi} \log p}{p^{2.25-1.75\frac{\log p}{n}}}\left(1+O\left(\frac{1}{n}+\frac{1}{\log
  p}\right)\right)$.  This implies
  that the integral is $o\left(\frac{1}{p^2}\right)$ (as long as $\frac{\log
  p}{n} < 4/49$, which
  must happen eventually since $\frac{\log p}{n} \to 0$).  Furthermore, if
  $\frac{(\log p)^2}{n} \to 0$, 
  then the integral is $O\left(\frac{\log p}{p^{2.25}}\right)$.

\end{proof}

\begin{lemma}
  \label{lemma:int2}
  For $f_n(x), \bar{F}_n(x)$ as previously defined,
  \begin{align*}
    \int_{\sqrt{\left(4-\frac{2}{k+2}\right)\log p}}^{\sqrt{n}} 2p^3\bar{F}_n^{k+1}(x) f_n(x)dx &= o\left(\frac{1}{p^{2k}}\right)
  \end{align*}
\end{lemma}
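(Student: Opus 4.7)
The plan is to reduce the integral to a closed form by recognizing the integrand as an exact derivative, and then to bound the resulting tail probability sharply using Lemma \ref{lemma:mills}. Write $a_p = \sqrt{(4 - 2/(k+2))\log p}$ for the lower limit. Since $\bar{F}_n'(x) = -f_n(x)$, we have
\[
\bar{F}_n^{k+1}(x)f_n(x) = -\frac{1}{k+2}\frac{d}{dx}\bar{F}_n^{k+2}(x),
\]
so integrating from $a_p$ to $\sqrt{n}$ (where $\bar{F}_n$ vanishes) gives
\[
\int_{a_p}^{\sqrt{n}} 2p^3 \bar{F}_n^{k+1}(x) f_n(x)\,dx = \frac{2p^3}{k+2}\,\bar{F}_n^{k+2}(a_p).
\]

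The second step is to plug in the Mills-ratio upper bound. By \eqref{upperbound} of Lemma \ref{lemma:mills},
\[
\bar{F}_n(a_p) \le \frac{n}{n-2}\cdot\frac{c_n}{a_p}\bigl(1 - a_p^2/n\bigr)^{(n-2)/2}.
\]
Using the elementary inequality $\log(1-t)\le -t$ for $t\in[0,1)$, we obtain
\[
(1-a_p^2/n)^{(n-2)/2} \le \exp\!\bigl(-(n-2)a_p^2/(2n)\bigr) = e^{-a_p^2/2}\cdot e^{a_p^2/n}.
\]
Because $a_p^2 < 4\log p$ and $\log p/n \to 0$, the factor $e^{a_p^2/n}$ is $1+o(1)$. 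Since $e^{-a_p^2/2} = p^{-(2-1/(k+2))}$, this yields
\[
\bar{F}_n(a_p) = O\!\left(\frac{p^{-(2-1/(k+2))}}{\sqrt{\log p}}\right).
\]

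The third step is simple bookkeeping. Raising to the $(k+2)$ power:
\[
\bar{F}_n^{k+2}(a_p) = O\!\left(\frac{1}{(\log p)^{(k+2)/2}}\,p^{-(k+2)(2-1/(k+2))}\right) = O\!\left(\frac{p^{-(2k+3)}}{(\log p)^{(k+2)/2}}\right),
\]
and multiplying through by $2p^3/(k+2)$ gives
\[
\int_{a_p}^{\sqrt{n}} 2p^3\bar{F}_n^{k+1}(x)f_n(x)\,dx = O\!\left(\frac{1}{(\log p)^{(k+2)/2}}\cdot\frac{1}{p^{2k}}\right) = o\!\left(\frac{1}{p^{2k}}\right),
\]
as desired. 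Notice that the choice $a_p = \sqrt{(4-2/(k+2))\log p}$ is precisely calibrated so that the polynomial factor $p^3$ is cancelled to give $p^{-2k}$, with the $(\log p)^{(k+2)/2}$ in the denominator providing the strict $o(\cdot)$ improvement.

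The only step with any subtlety is the bound on $(1-a_p^2/n)^{(n-2)/2}$ in the regime $\log p/n \to 0$; a two-sided expansion of $\log(1-a_p^2/n)$ would bring in a quadratic correction of size $O(a_p^4/n) = O((\log p)^2/n)$, which need not vanish under the stated hypothesis. The one-sided inequality $\log(1-t)\le -t$ sidesteps this issue entirely, since the resulting $e^{a_p^2/n}=e^{O(\log p/n)}$ is bounded and ultimately absorbed into the $O(\cdot)$ constant. Thus this lemma, unlike Conjecture \ref{conjecture}, requires no extra assumption beyond what is already in force.
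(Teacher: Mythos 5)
Your proof is correct, and it takes a genuinely different route from the paper's. The paper bounds the integrand pointwise: it substitutes the Mills-ratio upper bound for $\bar F_n(x)$ inside the integral, controls $\left(1-\frac{x^2}{n}\right)^{\frac{(k+2)n}{2}-k-3}$ by the tangent-line bound $e^{-\frac{k+2}{2}x^2}e^{(k+3)x^2/n}$, argues that this factor is maximized at the left endpoint, pulls it out, and then integrates the remaining $1/x^2$ over $[a_p,\sqrt n]$, arriving at $O\!\left(p^{-2k}(\log p)^{-k/2}\right)$. You instead observe that $\bar F_n^{k+1}f_n=-\frac{1}{k+2}\frac{d}{dx}\bar F_n^{k+2}$ is an exact derivative, so the integral collapses to $\frac{2p^3}{k+2}\bar F_n^{k+2}(a_p)$ in closed form, and only then do you invoke the Mills-ratio bound — once, at the single point $a_p$. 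This is cleaner: it avoids the paper's monotonicity argument about where the pointwise bound is largest, and it yields the slightly sharper rate $O\!\left(p^{-2k}(\log p)^{-(k+2)/2}\right)$. Both arguments use the same calibration of $a_p^2=\left(4-\frac{2}{k+2}\right)\log p$ so that $(k+2)\cdot\frac{a_p^2}{2}=(2k+3)\log p$ cancels the $p^3$ down to $p^{-2k}$, and both correctly use the one-sided inequality $\log(1-t)\le -t$ so that no quadratic correction of size $(\log p)^2/n$ is needed. The only detail worth a sentence is that the Mills-ratio bound of Lemma \ref{lemma:mills} requires $a_p<\sqrt n$, which holds for $n,p$ large since $\frac{\log p}{n}\to 0$.
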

\begin{proof}
  Using the Mills ratio result from \ref{lemma:mills}, we have
  \begin{align*}
    \int_{\sqrt{\left(4-\frac{2}{k+2}\right)\log p}}^{\sqrt{n}} 2p^3\bar{F}_n^{k+1}(x) f_n(x)dx 
    &\le \int_{\sqrt{\left(4-\frac{2}{k+2}\right)\log p}}^{\sqrt{n}} 
    \frac{2c_n^3p^3}{x^{k+1}}\left(1-\frac{x^2}{n}\right)^{\frac{(k+2)n}{2}-k-3}dx.
  \end{align*}

  Bounding the logarithm of $\left(1-\frac{x^2}{n}\right)^{\frac{(k+2)n}{2}-k-3}$ by its
  tangent at $x=0$, we obtain $\left(1-\frac{x^2}{n}\right)^{\frac{(k+2)n}{2}-k-3}
  \le e^{-\frac{(k+2)}{2}x^2}e^{(k+3)x^2/n}$.  This bound is largest at the left
  endpoint of the integral, $x^2 = \left(4-\frac{2}{k+2}\right)\log p$ (as long as $n\ge 3$).  Substituting this bound into the integral yields
  \begin{align*}
    \int_{\sqrt{\left(4-\frac{2}{k+2}\right)\log p}}^{\sqrt{n}} 2p^3\bar{F}_n^{k+1} f_n(x)dx 
    &\le 2c_n^{k+2} p^{-2k}\left(1+O\left(\frac{\log
    p}{n}\right)\right)\int_{\sqrt{\left(4-\frac{2}{k+2}\right)\log p}}^{\sqrt{n}} \frac{1}{x^2}dx\\
    &\le \frac{2c_n^3}{\sqrt{3}}\frac{1}{p^{2k}(\log p)^{k/2}}\left(1+O\left(\frac{\log
    p}{n}\right)\right)  = o\left(\frac{1}{p^{2k}}\right)
  \end{align*}
\end{proof}

\begin{lemma}
  \label{lemma:intboundfix}
  For $f_n(x),\bar{F}_n(x),G_{n,p}(x)$ as defined previously and $k\ge 1$,
  \begin{align*}
    \int_{0}^{\sqrt{n}} G_{n,p}(x)\bar{F}_n^{k-1}(x)f_n(x) dx \le \int_0^{\sqrt{n}}
    e^{\binom{p}{2}\bar{F}_n(x)} \bar{F}_n(x) f_n(x)dx +
    o\left(\frac{1}{p^{2k}}\right),
  \end{align*}
  where the result depends on Conjecture \ref{conjecture} for $k>1$.
\end{lemma}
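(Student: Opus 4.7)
The plan is to split the integral at the threshold $x_0 = \sqrt{\bigl(4 - 2/(k+2)\bigr)\log p}$ that appears in Conjecture \ref{conjecture}, so that each half can be attacked with a different tool. The low-range piece carries the error term, and the high-range piece is matched pointwise to the main integral on the right.

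On the lower range $[0,x_0]$, Conjecture \ref{conjecture} bounds $\int_0^{x_0} G_{n,p}(x)\bar{F}_n^{k-1}(x)f_n(x)\,dx$ directly by $o(1/p^{2k})$, which is exactly the error term claimed in the lemma. In the base case $k=1$, the conjecture is not needed: splitting instead at $\sqrt{3.5\log p}$ and invoking Lemma \ref{lemma:int1} gives the same $o(1/p^2)$ control. This is precisely why the lemma advertises dependence on Conjecture \ref{conjecture} only for $k > 1$.

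On the upper range $[x_0,\sqrt{n}]$, $\bar{F}_n(x)$ is very small --- of order $p^{-(2-1/(k+2))}$ or smaller by Mills' ratio (Lemma \ref{lemma:mills}) --- and the aim is a pointwise comparison of integrands. For $k\ge 2$, $\bar{F}_n(x)\le 1$ gives $\bar{F}_n^{k-1}(x) \le \bar{F}_n(x)$, and $G_{n,p}(x)\le 1 \le e^{\binom{p}{2}\bar{F}_n(x)}$; multiplying and restoring the $f_n(x)$ factor yields
\[
G_{n,p}(x)\bar{F}_n^{k-1}(x)f_n(x) \le e^{\binom{p}{2}\bar{F}_n(x)}\bar{F}_n(x)f_n(x)
\]
pointwise on $[x_0,\sqrt{n}]$. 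Since the right-hand integrand is nonnegative, extending the range of integration back to $[0,\sqrt{n}]$ only enlarges the upper bound and produces the main term in the lemma. For $k=1$ the same argument suffices because the right-hand integral is in fact enormous compared to the left (which is at most $\int_0^{\sqrt{n}}f_n(x)\,dx=\tfrac12$), so the inequality is essentially trivial once the low-range piece has been absorbed into the error term via Lemma \ref{lemma:int1}.

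The main obstacle is that the whole argument for $k > 1$ rides on Conjecture \ref{conjecture}, which the authors explicitly flag as unproven; past invoking the conjecture, the remaining steps are soft (Mills' ratio, monotonicity of the integration domain, and pointwise comparisons using $\bar{F}_n\le 1$). The essential difficulty of the lemma is therefore pushed entirely into the conjecture, and I do not expect a separate obstacle in the pointwise comparison on the high range or in stitching the two pieces together.
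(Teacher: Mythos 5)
There is a genuine gap, and it stems from taking the lemma's display at face value. As printed, the right-hand side has $e^{\binom{p}{2}\bar{F}_n(x)}$ with a \emph{positive} exponent, but this must be a typo: with a positive exponent the bound is vacuous (the integrand is of size $e^{p(p-1)/2}$ near $x=0$), it is inconsistent with the final display of the paper's own proof, and it is useless for the only place the lemma is invoked, namely Lemma \ref{lemma:astar}, where the conclusion is fed into Lemma \ref{lemma:int3} in the form $\int_0^{\sqrt{n}} e^{-\binom{p}{2}\bar{F}_n(x)}\,k\bar{F}_n^{k-1}(x)f_n(x)\,dx \le k!/\binom{p}{2}^k$. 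The intended statement is
\begin{align*}
\int_{0}^{\sqrt{n}} G_{n,p}(x)\bar{F}_n^{k-1}(x)f_n(x)\,dx \le \int_0^{\sqrt{n}} e^{-\binom{p}{2}\bar{F}_n(x)}\,\bar{F}_n^{k-1}(x)f_n(x)\,dx + o\left(\frac{1}{p^{2k}}\right).
\end{align*}
Your high-range argument rests entirely on the chain $G_{n,p}(x)\le 1\le e^{\binom{p}{2}\bar{F}_n(x)}$, which evaporates once the sign is corrected: the substantive content of the lemma is precisely that $G_{n,p}(x)$ can be replaced by $e^{-\binom{p}{2}\bar{F}_n(x)}$ on the high range, and that is \emph{not} a soft pointwise inequality.

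Your treatment of the low range is the same as the paper's (Lemma \ref{lemma:int1} for $k=1$, Conjecture \ref{conjecture} for $k>1$), and that part is fine. What is missing is the high-range step: the paper invokes the Chen--Stein (Poisson) approximation to $G_{n,p}$ from \cite{corrlimit}, whose pointwise error is bounded by $2p^3\bar{F}_n^2(x)$; after multiplying by $\bar{F}_n^{k-1}(x)f_n(x)$ and integrating over $[\sqrt{(4-2/(k+2))\log p},\sqrt{n}]$, this error is controlled by Lemma \ref{lemma:int2}, which is exactly why that lemma exists and why the split point is chosen where it is. Without this approximation-plus-error-integration step your proof does not establish the inequality that is actually used downstream. (A secondary note: your reduction $\bar{F}_n^{k-1}(x)\le\bar{F}_n(x)$ for $k\ge 2$ is also unnecessary once the statement is read with $\bar{F}_n^{k-1}$ on both sides, as in the paper's concluding display.)
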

\begin{proof}
  For the case $k=1$, Lemma \ref{lemma:int1} implies that 
  $\int_0^{\sqrt{3.5\log p}} G_{n,p}(x) f_n(x) dx  = o\left(\frac{1}{p^2}\right)$,
  and therefore 
  \begin{align*}
    \int_0^{\sqrt{n}} G_{n,p}(x) f_n(x) dx
    &\le \int_{\sqrt{3.5\log p}}^{\sqrt{n}} G_{n,p}(z) f_n(x)dx +
    o\left(\frac{1}{p^{2}}\right).
  \end{align*}
  The pointwise absolute error of the Chen-Stein approximation to $G_{n,p}(x)$
  presented in Lemmas 6.3 and 6.4 of \cite{corrlimit} is bounded by by
  $2p^3\bar{F}_n^2(x)$.  The total error from replacing $G_{n,p}(x)$ in our integral by
  the approximation $e^{-\binom{p}{2}\bar{F}_n(x)}$ is then bounded by
  $\int_{\sqrt{3.5\log p}}^{\sqrt{n}} 2p^3\bar{F}_n^2(x) f_n(x) dx$, which is in turn
  bounded by the result of Lemma \ref{lemma:int2}.  Therefore 
  \begin{align*}
    \int_0^{\sqrt{n}} G_{n,p}(x) f_n(x) dx &\le \int_{\sqrt{3.5\log p}}^{\sqrt{n}}
    e^{-\binom{p}{2}\bar{F}_n(x)} f_n(x)dx +
    o\left(\frac{1}{p^2}\right) \\
    &\le \int_{0}^{\sqrt{n}} e^{-\binom{p}{2}\bar{F}_n(x)} f_n(x)dx +
    o\left(\frac{1}{p^2}\right),
  \end{align*}
  so the lemma holds for $k=1$.

  For $k>1$, we rely on Conjecture \ref{conjecture}.
  As in the $k=1$ case, we split the integral, this time at
  $\sqrt{\left(4-\frac{2}{k+2}\right)\log p}$.  Invoking Conjecture
  \ref{conjecture}, we obtain
  \begin{align*}
    \int_0^{\sqrt{n}} G_{n,p}(x) \bar{F}_n^{k-1}(x)f_n(x) dx
    &\le \int_{\sqrt{\left(4-\frac{2}{k+2}\right)\log p}}^{\sqrt{n}} G_{n,p}(z)
    \bar{F}_n^{k-1}(x)f_n(x)dx +
    o\left(\frac{1}{p^{2k}}\right)
  \end{align*}
  Then the error from replacing $G_{n,p}(x)$ by $e^{-\binom{p}{2}\bar{F}_n(x)}$ is
  bounded by\\
  $\int_{\sqrt{(4-2/(k+2))\log p}}^{\sqrt{n}}2p^3\bar{F}_n^{k+1}(x)f_n(x)dx$, which is 
  $o\left(\frac{1}{p^{2k}}\right)$ by Lemma \ref{lemma:int2}.  Therefore,  
  \begin{align*}
    \int_{0}^{\sqrt{n}} G_{n,p}(x)\bar{F}_n^{k-1}(x)f_n(x) dx \le \int_0^{\sqrt{n}}
    e^{\binom{p}{2}\bar{F}_n(x)} \bar{F}_n^{k-1}(x) f_n(x)dx +
    o\left(\frac{1}{p^{2k}}\right),
  \end{align*}
  so the lemma holds for $k>1$, conditional on Conjecture \ref{conjecture}.
\end{proof}

\begin{lemma}
  \label{lemma:int3}
  Defining $\bar{F}_n(x)$ and $f_n(x)$ as above,
  \begin{align*}
    \int_0^{\sqrt{n}} e^{-\binom{p}{2} \bar{F}_n(x)}f_n(x)dx &\le \frac{1}{\binom{p}{2}}\\
    \int_0^{\sqrt{n}} e^{-\binom{p}{2} \bar{F}_n(x)}\bar{F}_n^k(x) f_n(x)dx &\le
    \frac{k!}{\binom{p}{2}^{k+1}}\\
  \end{align*}
\end{lemma}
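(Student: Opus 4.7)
The plan is to reduce both integrals to standard gamma/exponential integrals in the variable $u = \bar{F}_n(x)$. The crucial observation is that $\bar{F}_n$ is a survival function on $[0,\sqrt{n}]$: it satisfies $\bar{F}_n(0) = 1$, $\bar{F}_n(\sqrt{n}) = 0$, and $\frac{d}{dx}\bar{F}_n(x) = -f_n(x)$. This makes $-f_n(x)\,dx = du$ under the substitution $u = \bar{F}_n(x)$, and it converts the $f_n(x)\,dx$ appearing in both integrands into a clean Lebesgue measure $du$ on $[0,1]$, while also stripping the explicit appearance of $f_n$ and $\bar{F}_n$ from the integral in favor of an exponential weight.

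First, for the $k=0$ bound, I would apply the substitution directly to get
\begin{align*}
  \int_0^{\sqrt{n}} e^{-\binom{p}{2}\bar{F}_n(x)} f_n(x)\,dx = \int_0^1 e^{-\binom{p}{2} u}\,du = \frac{1-e^{-\binom{p}{2}}}{\binom{p}{2}} \le \frac{1}{\binom{p}{2}},
\end{align*}
which is the first claim. Note that the last inequality simply drops the positive correction term, so the bound is tight up to a vanishing factor.

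For the second claim, the same substitution converts the integral into
\begin{align*}
  \int_0^{\sqrt{n}} e^{-\binom{p}{2}\bar{F}_n(x)} \bar{F}_n^k(x) f_n(x)\,dx = \int_0^1 e^{-\binom{p}{2} u} u^k\,du.
\end{align*}
Then I would bound this by extending the upper limit from $1$ to $\infty$ (the integrand is nonnegative), which yields the gamma integral
\begin{align*}
  \int_0^1 e^{-\binom{p}{2}u} u^k\,du \le \int_0^{\infty} e^{-\binom{p}{2}u} u^k\,du = \frac{\Gamma(k+1)}{\binom{p}{2}^{k+1}} = \frac{k!}{\binom{p}{2}^{k+1}}.
\end{align*}

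There is essentially no obstacle here — once one recognizes that $\bar{F}_n$ is the natural change of variables that turns the $e^{-\binom{p}{2}\bar{F}_n(x)}$ factor into an exponential density (in $u$) with rate $\binom{p}{2}$, the lemma reduces to two elementary computations. The only minor point to verify is that $\bar{F}_n(0) = 1$, which follows immediately from the normalization $\int_0^{\sqrt{n}} f_n = 1$ established in the notation at the beginning of the appendix.
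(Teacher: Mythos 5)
Your proof is correct and follows essentially the same route as the paper: the substitution $u=\bar{F}_n(x)$ reducing both integrals to $\int_0^1 e^{-\binom{p}{2}u}u^k\,du$ is exactly the paper's argument. The only (immaterial) difference is at the final step, where you extend the integral to $\infty$ and invoke the Gamma integral, whereas the paper bounds $\int_0^1 e^{-\binom{p}{2}u}u^k\,du$ by integration by parts and induction on $k$; both give $k!/\binom{p}{2}^{k+1}$.
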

\begin{proof}
  To begin, note that $\frac{d}{dx} \bar{F}_n(x) = -f(x)$.  The first integral can
  then be evaluated by substituting $z = \bar{F}_n(x)$ to obtain
  \begin{align*}
    \int_0^{\sqrt{n}} e^{-\binom{p}{2} \bar{F}_n(x)}f_n(x)dx 
    = \int_0^1 e^{-\binom{p}{2} z}dz
    = \frac{1}{\binom{p}{2}} \left(1-e^{-p(p-1)/2}\right)
    \le \frac{1}{\binom{p}{2}}
  \end{align*}
  Similarly, the same substitution can be applied to the second integral,
  followed by integration by parts, to obtain
  \begin{align*}
    \int_0^{\sqrt{n}} e^{-\binom{p}{2} \bar{F}_n(x)}\bar{F}_n^k(x) f_n(x)dx 
    &= \int_0^1 e^{-\binom{p}{2} z} z^k dz
    = -\frac{1}{\binom{p}{2}}e^{-\binom{p}{2}} +
    \frac{k}{\binom{p}{2}}\int_0^1 e^{-\binom{p}{2}z} z^{k-1}dz\\
&\le \frac{k}{\binom{p}{2}}\int_0^1 e^{-\binom{p}{2}z} z^{k-1}dz
  \end{align*}
  Using the result for $k=1$ in the first integral, induction then implies that 
  \begin{align*}
    \int_0^{\sqrt{n}} e^{-\binom{p}{2} \bar{F}_n(x)}\bar{F}_n^k(x) f_n(x)dx 
    &\le \frac{k!}{\binom{p}{2}^{k+1}}
  \end{align*}
\end{proof}

\begin{lemma}
  \label{lemma:astar}

  Consider mutually independent random variables $\tilde{S}_1,\dots,\tilde{S}_k$ with
  distribution $f_n$ and $M$ which is distributed as
  $\max_{i<j}\sqrt{n}|S_{ij}|$ for $S_{ij}$ as in the set up of Theorem
  \ref{thm:globalfirststep}.

  Then 
  \begin{align*}
    \Pr(\tilde{S}_1,\dots,\tilde{S}_k > M) &\le \frac{k!}{\binom{p}{2}^k} +
    o\left(\frac{1}{p^{2k}}\right),
  \end{align*}
  and as a consequence,
  \begin{align*}
    \sum_{i<j} \Pr(A_{ij}^*) \to 1 \qquad \text{and} \qquad 
    \frac{1}{k!}\sum_{\allij} \Pr(A_{\allij}^*) \to 1
  \end{align*}
\end{lemma}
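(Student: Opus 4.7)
The plan is to first establish the probability bound and then deduce the two limit statements as direct consequences.

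For the first inequality, I would condition on the value of $M$ and use the mutual independence of $\tilde{S}_1,\dots,\tilde{S}_k$ and $M$ together with their marginal distributions to write
\begin{align*}
\Pr(\tilde{S}_1,\dots,\tilde{S}_k > M) = \int_0^{\sqrt{n}} \bar{F}_n^k(x)\,g_{n,p}(x)\,dx.
\end{align*}
Integration by parts with $u=\bar{F}_n^k(x)$ (so that $du=-k\bar{F}_n^{k-1}(x)f_n(x)\,dx$) and $v=G_{n,p}(x)$ kills both boundary terms, since $\bar{F}_n(\sqrt{n})=0$ and $G_{n,p}(0)=0$, yielding
\begin{align*}
\Pr(\tilde{S}_1,\dots,\tilde{S}_k > M) = k\int_0^{\sqrt{n}} G_{n,p}(x)\,\bar{F}_n^{k-1}(x)\,f_n(x)\,dx.
\end{align*}
I then apply Lemma \ref{lemma:intboundfix} to replace $G_{n,p}(x)$ by $e^{-\binom{p}{2}\bar{F}_n(x)}$ at an additive cost $o(1/p^{2k})$, and Lemma \ref{lemma:int3} to evaluate the resulting integral as $(k-1)!/\binom{p}{2}^k$. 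Multiplying by $k$ gives the claimed $k!/\binom{p}{2}^k + o(1/p^{2k})$ bound. (For $k>1$ this also inherits the reliance on Conjecture \ref{conjecture} through Lemma \ref{lemma:intboundfix}.)

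For the second claim, the key observation is that for any $\allij\in\Ic_k^*$, the correlations $S_{\ij{1}},\dots,S_{\ij{k}}$ are based on disjoint pairs of variables and $M^*_{\allij}$ is the maximum over the remaining $(p-2k)\times(p-2k)$ block, so all $k+1$ quantities are mutually independent. Hence by symmetry the probability $\Pr(A^*_{\allij})$ depends only on $k$ and $p$, and it equals the probability treated in the first part of the lemma with $p$ replaced by $p-2k$. Applying that bound gives $\Pr(A^*_{\allij})\le k!/\binom{p-2k}{2}^k + o(1/p^{2k})$. Using $|\Ic_k^*|=\prod_{\ell=0}^{k-1}\binom{p-2\ell}{2}$ and summing,
\begin{align*}
\frac{1}{k!}\sum_{\Ic_k^*}\Pr(A^*_{\allij}) \le \frac{\prod_{\ell=0}^{k-1}\binom{p-2\ell}{2}}{\binom{p-2k}{2}^k} + o(1),
\end{align*}
and the main term tends to $1$ since each ratio $\binom{p-2\ell}{2}/\binom{p-2k}{2}$ is $1+O(1/p)$. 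For the matching lower bound, $A_{\allij}\subseteq A^*_{\allij}$, and the events $A_{\allij}$ (over unordered sets of $k$ disjoint pairs) partition the event that the top $k$ off-diagonal correlations are pairwise disjoint, so $\frac{1}{k!}\sum_{\Ic_k^*}\Pr(A_{\allij})$ equals that probability, which converges to $1$ by Lemma \ref{lemma:maximadisconnect}. The $k=1$ statement $\sum_{i<j}\Pr(A_{ij}^*)\to 1$ is the specialization of this argument (with the lower bound trivially $\sum\Pr(A_{ij})=1$).

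I do not expect a substantive obstacle: the only analytic content is the integration-by-parts identity, after which Lemmas \ref{lemma:intboundfix} and \ref{lemma:int3} do all the work. The points requiring care are purely bookkeeping: justifying that the boundary terms in the integration by parts vanish, checking that $\Pr(A^*_{\allij})$ really is constant on $\Ic_k^*$ so the $|\Ic_k^*|/k!$ counting is correct, and verifying that the $O(p^{2k})$ combinatorial factor multiplying the $o(1/p^{2k})$ error term produces only an $o(1)$ contribution.
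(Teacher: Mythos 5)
Your proposal is correct and follows essentially the same route as the paper: both reduce the first claim to the integral $k\int_0^{\sqrt{n}} G_{n,p}(x)\bar{F}_n^{k-1}(x)f_n(x)\,dx$ (the paper by conditioning on the minimum of the $\tilde{S}_i$, you by conditioning on $M$ and integrating by parts) and then invoke Lemmas \ref{lemma:intboundfix} and \ref{lemma:int3}, and both deduce the limits by sandwiching $\Pr(A^*_{\cdots})$ between $\Pr(A_{\cdots})$ and the bound just proved. Your version is in fact slightly more careful than the paper's on the $p$ versus $p-2k$ bookkeeping and on identifying $\frac{1}{k!}\sum_{\Ic_k^*}\Pr(A_{\allij})$ with the probability controlled by Lemma \ref{lemma:maximadisconnect}.
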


\begin{proof}
  Note that $\Pr(\tilde{S}_1,\dots,\tilde{S}_k > M) = \int_0^{\sqrt{n}} G_{n,p}(x)
  k\bar{F}_n^{k-1}(x)f_n(x) dx$, since the independence of the
  $\tilde{S}_j$ means that the density of their minimum is
  $k\bar{F}_n^{k-1}(x)f_n(x)$.  We then apply Lemma \ref{lemma:intboundfix} to
  obtain the bound 
  \begin{align*}
    \Pr(\tilde{S}_1,\dots,\tilde{S}_k > M) &\le \int_0^{\sqrt{n}}
    e^{-\binom{p}{2}\bar{F}_n(x)} k\bar{F}_n^{k-1}(x) f_n(x) dx +
    o\left(\frac{1}{p^{2k}}\right)
    \le \frac{k!}{\binom{p}{2}^k} + o\left(\frac{1}{p^{2k}}\right)
  \end{align*}
  where the second inequality follows from Lemma \ref{lemma:int3}.   

  \noindent To obtain the limit $\sum_{i<j} A_{ij}^*\to 1$, note that $A_{ij}\subseteq
  A_{ij}^*$ and that $\Pr(A_{ij}) = \frac{2}{p(p-1)}$.  Therefore
  \begin{align*}
    \frac{2}{p(p-1)} \le \Pr(A_{ij}^*) \le \frac{2}{p(p-1)} +
    o\left(\frac{1}{p^2}\right)
  \end{align*}
  and so summing yields,
  \begin{align*}
    1 \le \sum_{i<j} \Pr(A_{ij}^*) \le 1 + o(1).
  \end{align*}

  Similarly, $A_{\allij} \subseteq A_{\allij}^*$, so
  \begin{align*}
    \frac{1}{k!}\sum_{\Ic_k^*} \Pr(A_{\allij}) \le \frac{1}{k!}\sum_{\Ic_k^*} \Pr(A_{\allij}^*) \le
    \frac{1}{k!}\sum_{\Ic_k^*} \frac{k!}{\binom{p}{2}^k} + o(1).
  \end{align*}
  Since both $\frac{1}{k!}\sum_{\Ic_k^*} \Pr(A_{\allij}) \to 1$ and
  $\frac{1}{k!}\sum_{\Ic_k^*}
  \frac{k!}{\binom{p}{2}^k} + o(1) \to 1$,\\
  $\frac{1}{k!}\sum_{\Ic_k^*} \Pr(A_{\allij}^*) \to 1$.   
\end{proof}

\bibliographystyle{imsart-nameyear}
\bibliography{glasso_paper}

\end{document}